\documentclass[a4paper,reqno]{amsart}

\usepackage{amsmath}
\usepackage{mathrsfs,amssymb,stmaryrd,bbm}
\usepackage[all,2cell]{xy}
\xyoption{v2}
\UseAllTwocells
\usepackage[showonlyrefs]{mathtools}

%%Theorems
\theoremstyle{plain}
\newtheorem{thm}{Theorem}%[section]
\newtheorem{cor}[thm]{Corollary}
\newtheorem{lemma}[thm]{Lemma}
\newtheorem{prop}[thm]{Proposition}

\theoremstyle{remark}

\newtheorem{rmk}[thm]{Remark}

\newtheorem{ex}[thm]{Example}

\theoremstyle{definition}

\newtheorem{df}{Definition}%[section]

\newcommand{\Ind}{\operatorname{Ind}}
\newcommand{\op}{{\mathrm{op}}}

\newcommand{\fin}{\ensuremath{\operatorname{Fin}}}

\newcommand{\Cat}{\ensuremath{\mathbf{Cat}}}

\newcommand{\V}{\ensuremath{V}}
\newcommand{\VCat}{\ensuremath{\V\text{-}\Cat}}

\newcommand{\delten}{\ensuremath{\bullet}}

\newcommand{\Rex}{\operatorname{Rex}}
\DeclareMathOperator*\colim{colim}
\newcommand{\kk}{\ensuremath{k}}
\newcommand{\kmod}{\ensuremath{\kk\text{-}\mathbf{Mod}}}

\newcommand{\Lex}{\ensuremath{\operatorname{Lex}}}

\begin{document}
\title[Tensor products of categories]{Tensor products of finitely cocomplete and abelian
  categories}
\author{Ignacio L\'opez Franco}
\address{Computing Department and Department of Mathematics\\
  Macquarie University, NSW 2109\\Australia}
\email{ignacio.lopezfranco@mq.edu.au}
\date{\today}
\thanks{The author acknowledges the support of the Australian Research
  Council in the form of the Discovery Project DP1094883 and of a
  Research Fellowship, Gonville and  Caius College, University of Cambridge. 
}
\begin{abstract}
The purpose of this article is to study the existence of Deligne's tensor
product of abelian categories by comparing it with the well-known
tensor product of finitely cocomplete categories. The main result
states that the former exists precisely when the latter is an abelian
category, and moreover in this case both tensor products
coincide. An example of two abelian categories whose Deligne tensor
product does not exist is given. 
\end{abstract}
\keywords{Deligne tensor product, abelian category, colimit}
\maketitle{}
\section{Introduction}

% A tensor product of abelian categories, known as Deligne's tensor
% product, was in troduced in \cite{Deligne:TannakianCats}, and although
% this was done through an abstract definition, its existence was stablished
% only for a special class of abelian categories.

% Deligne's tensor product of abelian categories was introduced in 
% \cite{Deligne:TannakianCats}, where, although an abstract definition
% is given, its existence is only proven for a special class of abelian
% categories.
% %  a construction, now known as Deligne's
% % tensor product of abelian categories, was introduced. 
% % Although the therein provided is an abstract
% % one, the author proves the existence of this tensor product only
% % for a special kind of abelian category
% Namely, categories
% that are union of full subcategories equivalent to a category of
% finite dimensional modules over a finite dimensional
% algebra. %\marginpar{field?} 

In recent years Deligne's tensor product of abelian categories has
encountered new 
applications apart from the original \cite{Deligne:TannakianCats},
especially in works on {\em tensor categories}\/ and their actions on
categories, in relation to Hopf algebra theory
\cite{Etingof:FinTenCats,Etingof:AnalogueRadford,Etiongof:OnFusionCats},
but also in derived geometry \cite{Lyubashenko:TenProdEqSh},
reconstruction theorems \cite{Lyubashenko:SqrHopf,1211.3678} and topological
quantum field theories \cite{MR1862634}. 
Despite its growing use, the theoretical material on this tensor product
still reduces to little more than the original definition.

\begin{df}[\cite{Deligne:TannakianCats}]\label{df:1}
  Deligne's tensor product of two abelian \kk-linear categories $A,B$
  (\kk\ a commutative ring) 
  is an {\em abelian} category  $A\delten B$, with a bilinear functor
$A\times B\to A\delten B$ that is right exact in each variable
and induces equivalences
\begin{equation}\label{eq:7}
\Rex[A\delten B,C]\simeq \Rex[A,B;C]
\end{equation} 
for each abelian 
category $C$. The category on the left hand side of \eqref{eq:7} is
the usual category of right exact $\kk$-linear functors and natural
transformations, while the one on the right hand side is the category
of $\kk$-bilinear functors $A\times B\to C$ that are right exact in each
variable. 
Observe that this determines $A\delten{}B$  only up to a unique up to
isomorphism equivalence of categories. 
\end{df} 

Despite the abstract definition, \cite{Deligne:TannakianCats} states
the existence of the tensor product only for a certain class of
abelian categories ---and we complete the proof of that result.
The monograph \cite{Lyubashenko:SqrHopf}, that 
uses the tensor product of \cite{Deligne:TannakianCats} and explores
in some length the resulting monoidal 2-category, does not
concern itself 
with the existence of the tensor product.

In the present paper we relate Deligne's tensor product with another
tensor product that extends it. The 
reason to consider another tensor product is one that some readers
may have already realised: Definition \ref{df:1} is peculiar in
that it requires $A\delten B$ to be an abelian category while its
universal property speaks solely about right exact functors.
This mismatch between the structures present on the categories and the
structures preserved by the functors make the existence of an
universal object as $A\delten B$ questionable. A more natural
definition would be one in which the categories are only finitely
cocomplete, dropping any assumption of abelianness.

% Since in
% giving this definition we already moved out from the universe of
% abelian categories and exact functors, it makes sense to complete the
% transfer and drop the demand that the categories be abelian, retaining
% only the requirement that they be finitely cocomplete.
% % This kind of
% % tensor product had been previously studied, for example in 
% % Kelly's book \cite{Kelly:BCECT}. \marginpar{Steve's thesis?}

Tensor products of categories with a certain class of colimits have
been described in
\cite{Kelly:BCECT,Kelly:BCECTrep}. An early instance of this
construction, for algebraic theories, appeared in \cite{MR0262334}.
The present article argues
in favour of the tensor product of categories with finite
colimits as an alternative to Deligne's tensor product.

% We finish by looking at finitely (co)complete monoidal categories as
% pseudomonoids with respect to the constructed tensor product, and
% study biduals and Frobenius structures. \marginpar{complete this}

Let us now recall what is shown in \cite{Deligne:TannakianCats} about
the existence of the tensor product of abelian categories. In a
similar fashion to Definition \ref{df:1}, 
\cite[5.1]{Deligne:TannakianCats} defines the tensor product of a
family of \kk-linear abelian categories, where \kk\ is a commutative
ring. Here a  first problem arises: the product of the empty
family of abelian categories ---or equivalently the unit object for the
tensor product if one prefers Definition \ref{df:1}--- has to be
necessarily equivalent to the category of finitely presentable
\kk-modules. This category is abelian if and only if \kk\ is a
coherent ring \cite{MR0120260,MR0217051}.
Therefore, if we want to have a unit object for the
tensor product we must assume that the ground commutative ring is
coherent. In practise this is not too restrictive a condition, but we
encounter more problems in binary products.

As we already mentioned, \cite{Deligne:TannakianCats} does not prove the
existence of its tensor product of abelian categories in general, and
we shall see in Section \ref{sec:exist-delign-tens} that there are
examples where indeed it does not exist. The existence result
\cite[5.13]{Deligne:TannakianCats} states that tensor
products do exist when the categories are \kk-linear abelian for \kk\ a field,
have finite dimensional homs and objects of finite length. As pointed
out by Deligne in a private communication, the proof in
\cite{Deligne:TannakianCats} is incomplete, and therefore we provide a
full proof.  
 
With the aim of making this paper accessible to as many readers as
possible, henceforth we shall assume that our categories are enriched
in $\V=\kmod$, for a fixed commutative ring \kk. % This does not mean to
% say that our results are restricted to this case, and in fact they
% hold for general complete and cocomplete symmetric monoidal closed
% category \V, locally finitely presentable when required. % However, in
% % order to provide the proofs in the most general case we would have to
% % take a significant technical detour by modifying a good number of
% % the constructions and results in \cite{BKP}.
Sometimes we use the term \kk-linear categories.
Functors will also be enriched in \V, automatically making ordinary
natural transformations enriched in \V. From time to time, when the
results hold in greater generality we shall make an explicit remark.    

The paper is organised as follows. Section \ref{sec:background}
compiles some of the prerequisites for the rest of the article,
including free completions under colimits,
bicolimits of pseudofunctors and tensor
products of finitely cocomplete categories. 
In Section \ref{sec:abel-categ-mult} we obtain conditions that
ensure that categories of left exact functors in two variables are
abelian. These results are employed 
in Section
\ref{sec:exist-delign-tens}, which deal with the question of the existence
of Deligne's tensor product of abelian categories by comparing it with
the tensor product of finitely cocomplete categories. We show
\begin{thm}
  \label{thm:5}
  Given two small abelian categories $A,B$ the following are equivalent.
  \begin{enumerate}
  \item
    Deligne's tensor product of two abelian categories $A,B$ exists.
  \item
    The tensor product of $A,B$ as finitely cocomplete categories is
    an abelian category.
  \end{enumerate}
\end{thm}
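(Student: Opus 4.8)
The plan rests on comparing Deligne's tensor product with the tensor product $T$ of $A$ and $B$ as finitely cocomplete categories, which always exists (Section~\ref{sec:background}) and carries a \kk-bilinear functor $\otimes\colon A\times B\to T$, right exact in each variable, inducing $\Rex[T,C]\simeq\Rex[A,B;C]$ for every finitely cocomplete $C$, the equivalence being precomposition with $\otimes$. I will use two further facts. First, the free cocompletion $\Ind(T)$ of $T$ under filtered colimits is equivalent to a category of left exact functors in two variables of the kind studied in Section~\ref{sec:abel-categ-mult}, concretely $\Ind(T)\simeq\Lex[A^{\op},B^{\op};\V]$, and $T$ is recovered as the full subcategory $\Ind(T)_{\mathrm{fp}}$ of finitely presentable objects, because $T$, having coequalizers, is idempotent complete. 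Second --- and it is here that the hypothesis that $A$ and $B$ are abelian is used --- the results of Section~\ref{sec:abel-categ-mult} show that $\mathcal{E}:=\Ind(T)$ is a Grothendieck abelian category; this abelianness of $\mathcal{E}$ is the crux of the theorem, and I take it as given.

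The implication $(2)\Rightarrow(1)$ is then immediate: if $T$ is abelian, then, since an abelian category is in particular finitely cocomplete and a \kk-linear functor between abelian categories preserves finite colimits exactly when it preserves cokernels --- the classical notion of right exactness --- the universal property of $T$ specialises to $\Rex[T,C]\simeq\Rex[A,B;C]$ for every abelian $C$, so that $(T,\otimes)$ is a Deligne tensor product of $A$ and $B$.

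For $(1)\Rightarrow(2)$, suppose $D:=A\delten B$ exists; I claim there is an equivalence of categories $\Ind(D)\simeq\mathcal{E}$, from which, on restricting to finitely presentable objects, one gets $D\simeq\Ind(D)_{\mathrm{fp}}\simeq\mathcal{E}_{\mathrm{fp}}=\Ind(T)_{\mathrm{fp}}\simeq T$, so that $T$ is abelian. Both $\Ind(D)$ and $\mathcal{E}$ are Grothendieck abelian ($\Ind$ of a small abelian category is Grothendieck abelian, and $\mathcal{E}$ is so by the above), so by the $2$-categorical Yoneda lemma in the $2$-category of Grothendieck abelian categories and cocontinuous \kk-linear functors it suffices to produce a pseudonatural equivalence between the represented pseudofunctors $\operatorname{Cocts}[\Ind(D),-]$ and $\operatorname{Cocts}[\mathcal{E},-]$. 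For a Grothendieck abelian category $\mathcal{G}$ one has a chain of equivalences, each natural in $\mathcal{G}$,
\[
\operatorname{Cocts}[\Ind(D),\mathcal{G}]\ \simeq\ \Rex[D,\mathcal{G}]\ \simeq\ \Rex[A,B;\mathcal{G}]\ \simeq\ \Rex[T,\mathcal{G}]\ \simeq\ \operatorname{Cocts}[\Ind(T),\mathcal{G}]=\operatorname{Cocts}[\mathcal{E},\mathcal{G}]
\]
in which $\operatorname{Cocts}[\Ind(D),\mathcal{G}]\simeq\Rex[D,\mathcal{G}]$ and $\Rex[T,\mathcal{G}]\simeq\operatorname{Cocts}[\Ind(T),\mathcal{G}]$ express the universal property of $\Ind$ as the free cocompletion under filtered colimits (restriction along $D\hookrightarrow\Ind(D)$, respectively $T\hookrightarrow\Ind(T)$, together with the fact that such an extension is cocontinuous exactly when its restriction is right exact); $\Rex[A,B;\mathcal{G}]\simeq\Rex[T,\mathcal{G}]$ is the universal property of $T$; and $\Rex[D,\mathcal{G}]\simeq\Rex[A,B;\mathcal{G}]$ is Deligne's universal property of $D$, realised as precomposition with the bilinear functor $A\times B\to D$.

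The one point requiring care is that Deligne's universal property is stated for small abelian target categories, whereas here $\mathcal{G}$ --- in particular $\mathcal{G}=\Ind(D)$ and $\mathcal{G}=\mathcal{E}$ --- may be large. This is dealt with by a standard reduction: any \kk-bilinear, variablewise right exact functor $A\times B\to\mathcal{G}$, and any right exact \kk-linear functor $D\to\mathcal{G}$, factors through the abelian subcategory of $\mathcal{G}$ generated by its (small) set of values; this subcategory is small, since Grothendieck abelian categories are well-powered, and its inclusion into $\mathcal{G}$ is exact and faithful, hence reflects right exactness. Applying Deligne's property to such subcategories and checking the compatibilities makes the middle equivalence of the chain natural in $\mathcal{G}$; the chain is then a pseudonatural equivalence, Yoneda yields $\Ind(D)\simeq\mathcal{E}$, and $T\simeq D$ follows. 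In short, once the abelianness of $\mathcal{E}$ supplied by Section~\ref{sec:abel-categ-mult} is granted --- and this is where the real difficulty lies --- the remainder of the argument is essentially formal, the only delicate bookkeeping being this passage from small to large abelian targets in Deligne's universal property.
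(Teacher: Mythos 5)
Your proposal is correct and follows essentially the same route as the paper: both arguments hinge on the abelianness of $\Lex[A^{\op},B^{\op};\V]\simeq\Ind(A\boxtimes B)$ from Corollary \ref{cor:4}, identify it with $\Ind(A\delten B)$ by matching universal properties against cocomplete abelian targets (the paper via Lemma \ref{l:10}, you via an equivalent bicategorical Yoneda argument), and then pass to finitely presentable objects to get $A\delten B\simeq A\boxtimes B$. Your extra reduction to small abelian subcategories is harmless but not needed, since Definition \ref{df:1} imposes no smallness on the test category $C$.
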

Using this result we provide an example, based on work by Soublin
\cite{MR0260799}, where \kk\ is a field:
\begin{cor}
  \label{cor:3}
  There exist two abelian \kk-linear categories whose Deligne's tensor
  product does not exist. 
\end{cor}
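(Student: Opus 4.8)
The plan is to exhibit, over a suitable field \kk, two small abelian \kk-linear categories $A,B$ whose tensor product as finitely cocomplete \kk-linear categories is \emph{not} abelian; Theorem \ref{thm:5} then shows at once that $A\delten B$ does not exist.

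Both $A$ and $B$ will be categories of finitely presented modules. For a commutative \kk-algebra $R$, let $\mathrm{fp}(R)$ denote the category of finitely presented $R$-modules. As developed in Section \ref{sec:background}, $\mathrm{fp}(R)$ is the free finitely cocomplete \kk-linear category on the one-object \kk-category with endomorphism algebra $R$ (freely adjoin finite biproducts, then cokernels), it is essentially small, and it is abelian exactly when $R$ is coherent \cite{MR0120260,MR0217051}. Since the free finite-colimit completion sends the \kk-linear tensor product of small \kk-categories to the tensor product of finitely cocomplete \kk-linear categories, and the \kk-linear tensor of the one-object categories on $R$ and $S$ is the one-object category on $R\otimes_{\kk}S$, one obtains for commutative \kk-algebras $R,S$ an equivalence
\begin{equation*}
  \mathrm{fp}(R)\otimes\mathrm{fp}(S)\ \simeq\ \mathrm{fp}(R\otimes_{\kk}S),
\end{equation*}
with the left-hand tensor product taken in finitely cocomplete \kk-linear categories. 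Establishing this equivalence --- matching the two universal properties --- is the one point that requires argument; the remainder is a matter of quoting Soublin's example and Theorem \ref{thm:5}.

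Granting it, let \kk\ be a field and let $R$ be the commutative coherent \kk-algebra of Soublin \cite{MR0260799} for which the polynomial ring $R[X]=R\otimes_{\kk}\kk[X]$ fails to be coherent. Set $A=\mathrm{fp}(R)$ and $B=\mathrm{fp}(\kk[X])$; both are essentially small and so may be replaced by small equivalents. Then $A$ is abelian since $R$ is coherent, and $B$ is abelian since $\kk[X]$ is Noetherian. By the displayed equivalence, the tensor product of $A$ and $B$ as finitely cocomplete \kk-linear categories is $\mathrm{fp}(R[X])$, which is not abelian because $R[X]$ is not coherent. Theorem \ref{thm:5} now yields that Deligne's tensor product $A\delten B$ does not exist.
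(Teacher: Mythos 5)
Correct, and essentially the paper's own argument: reduce via Theorem \ref{thm:5} and the equivalence $R\text{-}\mathrm{Mod}_f\boxtimes S\text{-}\mathrm{Mod}_f\simeq (R\otimes_{\kk}S)\text{-}\mathrm{Mod}_f$ of Example \ref{ex:1} to producing two coherent \kk-algebras whose tensor product over \kk\ is not coherent, and then invoke Soublin \cite{MR0260799}. The only divergence is the choice of example from Soublin's paper --- you take his coherent ring $R$ with $R[X]$ non-coherent and pair it with $\kk[X]$, whereas the paper pairs the algebra of stationary sequences with $\kk[[x]]$; your choice has the mild advantage that the identification $R\otimes_{\kk}\kk[X]\cong R[X]$ is immediate.
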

We give complete the proof of the result in \cite{Deligne:TannakianCats}:
\begin{thm}
  \label{thm:7}
  For a field \kk, the
   tensor product (as finitely cocomplete categories) of two \kk-linear abelian
   categories with finite dimensional homs and objects
   of finite length is again
   abelian. In particular their Deligne tensor product exists. 
 \end{thm}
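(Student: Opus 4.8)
The plan is to reduce, by Theorem~\ref{thm:5}, to proving that the tensor product of $A$ and $B$ \emph{as finitely cocomplete categories}, which I will write $A\boxtimes B$, is abelian; the ``in particular'' clause is then immediate from Theorem~\ref{thm:5}. Since $A$ and $B$ are finitely cocomplete, the free completions $\Ind A=\Lex[A^{\op},\kmod]$ and $\Ind B=\Lex[B^{\op},\kmod]$ are locally finitely presentable $\kk$-linear categories with $(\Ind A)_{\mathrm{fp}}\simeq A$ and $(\Ind B)_{\mathrm{fp}}\simeq B$, and the material of Section~\ref{sec:background} on free completions and on tensor products of finitely cocomplete categories identifies $\Ind(A\boxtimes B)$ with the tensor product of $\Ind A$ and $\Ind B$ as locally finitely presentable categories, namely with $\scr L:=\Lex[A^{\op},\Ind B]$ --- equivalently, unwinding $\Ind B=\Lex[B^{\op},\kmod]$, with the category of $\kk$-linear functors $A^{\op}\times B^{\op}\to\kmod$ that are left exact in each variable --- in such a way that $A\boxtimes B\simeq\scr L_{\mathrm{fp}}$. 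It therefore suffices to show that $\scr L$ is a locally finitely presentable \emph{abelian} category all of whose finitely presentable objects have finite length: such an $\scr L$ is locally coherent, so $\scr L_{\mathrm{fp}}=A\boxtimes B$ is closed in $\scr L$ under kernels, cokernels and finite biproducts, hence is abelian.

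The first step is that $\scr L$ is abelian. Here $A^{\op}$ is abelian and $\Ind B$ is a Grothendieck abelian category (filtered colimits of left exact presheaves on $B^{\op}$ are computed pointwise, and are therefore exact), so the results of Section~\ref{sec:abel-categ-mult} on categories of left exact functors in two variables apply and show that $\scr L=\Lex[A^{\op},\Ind B]$ is a Grothendieck abelian, locally finitely presentable category with $\scr L_{\mathrm{fp}}\simeq A\boxtimes B$.

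The heart of the proof is the finite length of the objects of $\scr L_{\mathrm{fp}}$, and this is where the hypothesis that the $\kk$-homs be \emph{finite-dimensional} --- rather than merely that objects have finite length --- is essential. I would exploit it through the reconstruction theorem for locally finite $\kk$-linear Grothendieck categories: the hypotheses on $A$ and $B$ say precisely that $\Ind A$ and $\Ind B$ are generated by finite-length objects with finite-dimensional endomorphism algebras, so $\Ind A\simeq\mathrm{Comod}\text{-}C$ and $\Ind B\simeq\mathrm{Comod}\text{-}C'$ for $\kk$-coalgebras $C,C'$. Under this identification the tensor product $\scr L$ of $\Ind A$ and $\Ind B$ as locally finitely presentable categories becomes the category of comodules over the tensor product coalgebra, $\scr L\simeq\mathrm{Comod}(C\otimes_{\kk}C')$; and any category of comodules over a $\kk$-coalgebra is locally finite --- concretely, the generators $a\boxtimes b$ of $\scr L_{\mathrm{fp}}$ correspond to finite-dimensional $C\otimes_\kk C'$-comodules, whose subobjects in $\scr L$ are again finite-dimensional. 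Together with the previous paragraph this gives that $A\boxtimes B=\scr L_{\mathrm{fp}}$ is abelian, and Theorem~\ref{thm:5} then yields that Deligne's tensor product of $A$ and $B$ exists (and coincides with $A\boxtimes B$), completing the proof.

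I expect the local finiteness of $\scr L$ to be the main obstacle: in contrast to the abelianness of $\scr L$, it is a genuine closure property of $\scr L_{\mathrm{fp}}$ under kernels, and it is precisely this that \emph{fails} without the finiteness hypotheses --- that failure is what underlies Corollary~\ref{cor:3} and the example of Section~\ref{sec:exist-delign-tens}. If one prefers to avoid coalgebras and stay within the framework of Section~\ref{sec:abel-categ-mult}, the local finiteness of $\scr L$ can instead be proved directly: using finite filtrations of $a$ and $b$ by simple objects and the exactness of $-\otimes_{\kk}-$ over the field $\kk$, one reduces to bounding the length of $s\boxtimes t$ for $s\in A$ and $t\in B$ simple, where $\operatorname{End}_{\scr L}(s\boxtimes t)\cong\operatorname{End}_A(s)\otimes_\kk\operatorname{End}_B(t)$ is a finite-dimensional $\kk$-algebra; obtaining that bound is the crux of this alternative.
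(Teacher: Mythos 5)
Your reduction is the right one (via Theorem \ref{thm:3}, it suffices to show $A\boxtimes B$ is abelian), and your first step --- that $\scr L=\Lex[A^\op,B^\op;\V]$ is abelian and locally finitely presentable with $\scr L_f\simeq A\boxtimes B$ --- is exactly what Corollary \ref{cor:4} and Remark \ref{rmk:4} give. Your overall strategy (prove $\scr L$ is locally finite, deduce that the finitely presentable objects are closed under kernels and hence form an abelian subcategory) is a genuinely different organization from the paper's, and it is a viable one. The paper instead never works inside $\scr L$: it writes $A$ and $B$ as filtered bicolimits of subcategories equivalent to $\mathsf A_\alpha\text-\mathrm{Mod}_f$ for finite dimensional algebras (Lemmas \ref{l:7} and \ref{l:4}), uses that $\boxtimes$ preserves filtered bicolimits and that the transition functors $A_\alpha\boxtimes B_\beta\to A_{\alpha'}\boxtimes B_{\beta'}$ are restrictions of scalars and hence exact (Example \ref{ex:1} and Lemma \ref{l:2}), and concludes that $A\boxtimes B$ is a filtered bicolimit of abelian categories along exact functors, hence abelian (Lemma \ref{l:15}).

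The problem is that the pivotal step of your argument is asserted rather than proved, and it is exactly as hard as the theorem itself. The identification $\scr L\simeq\mathrm{Comod}(C\otimes_\kk C')$ is not a formal consequence of Takeuchi reconstruction: reconstruction gives you $\Ind A\simeq\mathrm{Comod}\text-C$ and $\Ind B\simeq\mathrm{Comod}\text-C'$, but the compatibility of the Kelly tensor product with the passage to the tensor coalgebra is precisely the finiteness statement in question, and its standard proof runs through writing $C$ and $C'$ as filtered unions of finite dimensional subcoalgebras $D\subseteq C$ --- whose dual module categories $D^*\text-\mathrm{Mod}_f\subseteq A$ are the subcategories of Lemma \ref{l:7} --- i.e.\ through the paper's argument. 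Your fallback ``direct'' route makes the gap explicit: you reduce to bounding the length of $s\boxtimes t$ for simples $s,t$ and concede that this bound is the crux. Note that finite dimensionality of $\operatorname{End}(s\boxtimes t)\cong\operatorname{End}(s)\otimes_\kk\operatorname{End}(t)$ does not by itself bound the length of $s\boxtimes t$ in $\scr L$; what one actually needs is that the subobjects of $s\boxtimes t$ computed in $\scr L$ agree with those computed in $(D\otimes_\kk D')\text-\mathrm{Mod}_f$ where $D=\operatorname{End}(s)$, $D'=\operatorname{End}(t)$, and that comparison is an exactness statement of the same nature as Lemma \ref{l:2}. So the skeleton is sound, but the load-bearing finiteness/exactness input is missing.
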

Finally, Section \ref{sec:semis-categ} treats the case
of simisimple categories by characterising them as special free
completions under finite colimits. 
 
The author would like to thank Martin Hyland for this support
insightful conversations, Mike Prest for pointing out Soublin's
results on coherent rings and Pierre Deligne for invaluable
communications and spotting a number of mistakes in a draft version of
the present paper.

\section{Background}\label{sec:background}
This section collects known results and constructions necessary to
develop the rest of the paper. 

All categories and functors will be \kk-linear, {\em i.e.}, enriched in
the category $\V=\kk\text-\mathrm{Mod}$ of \kk-modules for a
commutative ring \kk. % Some of the contents of this paper hold for a
% more general \V, so we sometimes simply refer to \V-categories and
% \V-functors. 
The tensor product of two such categories $A,B$ is a \V-category
$A\otimes B$ with objects $\operatorname{ob}A\times\operatorname{ob}B$
and enriched homs $(A\otimes B)((a,b),(a',b'))=A(a,a')\otimes
B(b,b')$. Identities and composition are given in the obvious way
---using the symmetry of \V. A functor of two variables from $A,B$ to
a third category $C$ is a functor $A\otimes B\to C$.

\subsection{Locally finitely presentable categories}
\label{sec:locally-finit-pres}

From time to time we shall need to use some classical facts about
locally finitely presentable categories, which for the sake of
clarity of exposition we present in this separate section. Locally
finitely presentable categories were introduced in \cite{MR0327863},
and full expositions can be found in \cite{MR1031717} and
\cite{MR1294136}, while their theory in the context of enriched categories was
developed in \cite{Kelly:StructuresFiniteLimits}. We refer the reader
to these sources for more background on the facts we recall below. 
Our case is
somewhere between the classical and the enriched situations in the
sense that, although all our categories are enriched in
$\V=\kk\text-\mathrm{Mod}$, because this category is extremely well
behaved all the nuances of the enriched context disappear.

We say that a full subcategory $K:G\hookrightarrow A$ is a {\em strong
generator\/} (resp. {\em dense\/}) if the functor $A\to[G^\op,\V]$ given by $a\mapsto
G(K-,a)$ reflects isomorphisms (resp. is fully faithful).

An object $a$ of the cocomplete category $A$ is {\em finitely
  presentable}\/ when the representable functor $A(a,-):A\to\V$
preserves filtered colimits. % The intuition is that these objects
% satisfy a finitness condition.
The full subcategory of locally
presentable objects is denoted by $A_f$. Because filtered colimits
distribute over finite limits in \V,
it is easy to see that $A_f$ is closed in $A$ under finite colimits. 

\begin{thm} 
  \label{thm:8}
  For a \V-category $A$ the following are equivalent.
  \begin{enumerate}
  \item\label{item:11}
    $A$ is cocomplete and has a small strong generator formed by
    finitely presentable objects.
  \item\label{item:12}
    $A$ is equivalent to $\operatorname{Lex}[C^\op,\V]$ for some small
    finitely cocomplete category $C$. 
  \end{enumerate}
  If a category $A$ satisfies \ref{item:11} then the functor $A\to
  \operatorname{Lex}[A_f^\op,\V]$ induced by the inclusion
  $A_f\subset A$ is an equivalence. Moreover if
  $G\subset A_f$ is the strong generator, $A_f$ is the closure
  of $G$ under finite colimits. 
  Conversely,
  $\operatorname{Lex}[C^\op,\V]_f$ is given by the representable
  presheaves, and thus equivalent to $C$. 
\end{thm}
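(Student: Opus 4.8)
The plan is to prove both implications, together with the two ``moreover'' clauses, in one stroke, by producing an explicit equivalence between $A$ and a category of left exact presheaves in which, on both sides, the representables are precisely the finitely presentable objects. For $(2)\Rightarrow(1)$, say $A\simeq\Lex[C^\op,\V]$ with $C$ small and finitely cocomplete. The full subcategory $\Lex[C^\op,\V]\subseteq[C^\op,\V]$ is closed under filtered colimits (finite limits commute with filtered colimits in $\V$) and under all limits, and it is reflective (the left exact reflection), hence cocomplete. Each representable $\y c=C(-,c)$ sends colimits in $C^\op$ to limits in $\V$, so $\y c\in\Lex[C^\op,\V]$; the $\y c$ form a small family; and by Yoneda $\Lex[C^\op,\V](\y c,-)\cong\mathrm{ev}_c$, which reflects isomorphisms as $c$ ranges over $C$ and preserves filtered colimits, the latter being computed pointwise in $\Lex[C^\op,\V]$. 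Thus $\{\y c\}_{c\in C}$ is a small strong generator of finitely presentable objects.

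For $(1)\Rightarrow(2)$, let $G\subseteq A_f$ be the given strong generator and let $C\subseteq A_f$ be its closure under finite colimits in $A$ (recall $A_f$ is closed under finite colimits); this $C$ is small, finitely cocomplete, and idempotent complete, with a fully faithful inclusion $\iota\colon C\hookrightarrow A$. Restricted Yoneda $E\colon A\to[C^\op,\V]$, $Ea=A(\iota-,a)$, factors through $\Lex[C^\op,\V]$, since $A(-,a)$ turns colimits into limits and finite colimits in $C$ are computed in $A$; and since $A$ is cocomplete and $C$ small, $E$ has a left adjoint $L=\operatorname{Lan}_\y\iota$, with $L\y\cong\iota$. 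I would then check: (a) $E$ reflects isomorphisms, because following $E$ with restriction along $G\hookrightarrow C$ yields the conservative functor $a\mapsto G(K-,a)$ witnessing that $G$ is a strong generator; (b) $E$ preserves filtered colimits, because these are computed pointwise in $\Lex[C^\op,\V]$ and each $\iota c$ is finitely presentable; (c) the unit $\eta\colon\mathrm{id}\Rightarrow EL$ is invertible on every $F\in\Lex[C^\op,\V]$, since it is invertible on representables (triangle identity and $L\y\cong\iota$), $L$ preserves all colimits, $E$ preserves filtered ones by (b), and every $F$ is a filtered colimit of representables (here $C$ has finite colimits, so $\Lex[C^\op,\V]\simeq\Ind C$). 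Then $E$ is essentially surjective by (c) and fully faithful because the counit $\varepsilon_a\colon LEa\to a$ satisfies $E\varepsilon_a=\eta_{Ea}^{-1}$ by a triangle identity, hence is invertible by (a); so $E$ is an equivalence, which is $(2)$.

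For the remaining clauses I would first identify $\Lex[C^\op,\V]_f$ for an arbitrary small finitely cocomplete $C$, which is the last assertion of the theorem: representables are finitely presentable by $(2)\Rightarrow(1)$; conversely a finitely presentable $F$, being a filtered colimit of representables (again $\Lex[C^\op,\V]\simeq\Ind C$), is a retract of some $\y c$, and since $C$ is idempotent complete and $\y$ fully faithful, $F\cong\y c'$ where $c'$ splits the induced idempotent on $c$; thus $\y$ restricts to an equivalence $C\simeq\Lex[C^\op,\V]_f$. Transporting finite presentability across the equivalence $E$ of $(1)\Rightarrow(2)$ then shows that $A_f$ is exactly the essential image of $\iota$, that is, the closure of $G$ under finite colimits; and composing $E$ with the equivalence $\Lex[C^\op,\V]\simeq\Lex[A_f^\op,\V]$ identifies it with the functor $A\to\Lex[A_f^\op,\V]$ induced by $A_f\subseteq A$, which is therefore an equivalence.

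The main obstacle is step $(1)\Rightarrow(2)$, and within it the invertibility of the counit of $L\dashv E$ — equivalently, the statement that the strong generator, once closed under finite colimits, is actually \emph{dense} in $A$. This is where finite presentability of the generating objects is indispensable, through (b), and where the good behaviour of $\V=\kmod$ enters, via the identification $\Lex[C^\op,\V]\simeq\Ind C$ that reduces invertibility questions from arbitrary objects to representables. The remaining ingredients — reflectivity of $\Lex[C^\op,\V]$, splitting of idempotents in finitely cocomplete categories, and the Yoneda and Kan-extension formalism — are routine.
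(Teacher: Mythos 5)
Your argument is correct. Note, however, that the paper offers no proof of this statement: it is presented as background (Gabriel--Ulmer theory) with references to \cite{MR0327863}, \cite{MR1031717}, \cite{MR1294136} and \cite{Kelly:StructuresFiniteLimits}, so there is no in-text proof to compare against; what you have written is essentially the standard argument from those sources. Each step checks out: the representables in $\Lex[C^\op,\V]$ are finitely presentable because filtered colimits there are pointwise and $\Lex[C^\op,\V](\y c,-)\cong\mathrm{ev}_c$; your adjunction $L\dashv E$ with $E$ conservative, filtered-colimit-preserving, and with invertible unit on representables does yield an equivalence; and the identification of $\Lex[C^\op,\V]_f$ with the representables via retracts and split idempotents is sound, since a finitely cocomplete $C$ is indeed idempotent complete (split the idempotent as a coequaliser). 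Two points deserve explicit care. First, you assert the reflectivity of $\Lex[C^\op,\V]$ in $[C^\op,\V]$ without proof; this is legitimate here since the paper itself invokes it (citing \cite[Thm.~6.5]{Kelly:BCECT}), but it is the one genuinely non-elementary input. Second, your step (c) and the retract argument both rest on the fact that every left exact presheaf on a finitely cocomplete $C$ is a filtered colimit of representables; in the paper this appears only as part of Theorem \ref{thm:10}, which is stated \emph{after} Theorem \ref{thm:8} and partly in terms of it, so to avoid the appearance of circularity you should justify it directly (for $\V=\kmod$ the category of elements of a left exact $F$ is filtered because $C$ has finite colimits, and the canonical colimit over it recovers $F$). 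With that made explicit, the proof is complete.
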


A \V-category satisfying these equivalent conditions is called a
{\em locally finitely presentable}\/ \V-category. Any such category is
complete.

The following result will be used in Lemma \ref{l:1}. In order for a functor
$F:A\to B$ between locally finitely presentable categories that
preserves filtered colimits to be left exact it is enough to preserve
kernels of arrows between finitely presentable objects. This is a
consequence of the distributivity of filtered colimits over finite
limits in any locally finitely presentable category, and a short proof
can be produced by using the {\em uniformity lemma}\/ of
\cite{MR1004603}. 

\subsection{Completions under colimits}
\label{sec:compl-under-colim}

By a class of conical colimits $\Phi$ we mean a class of small categories.
% In full
% generality a class of colimits is a class of presheaves ---called
% weights--- but as our base category \V\ is \kmod\ the restriction
We say that a category $A$ has $\Phi$-colimits, or it is
$\Phi$-cocomplete, if every functor $f:D\to A$ with
$D\in\Phi$ has a (conical) colimit. % For example, if the only element
% of $\Phi$ is $1+1$ ---a discrete category with two objects--- a $\Phi$-cocomplete
% category is one with binary coproducts. If $\Phi$ consists only of the
% empty category, we obtain categories with an initial object, and in the
% case that $\Phi$ consists only of the \V-category generated by
% $\{0\rightrightarrows 1\}$  we obtain categories with coequalisers. % One
% % particular class of conical colimits we are interested in is \fin\,
% % the class of finite categories, so a \fin-cocomplete category is a
% % category with finite colimits.
A functor is $\Phi$-cocontinuous when it preserves $\Phi$-colimits. 

Given a class of conical colimits $\Phi$, a free completion of a
category $X$ under $\Phi$-colimits is a functor $X\to \Phi(X)$, where
$\Phi(X)$ is a $\Phi$-cocomplete category, that under
composition induces an equivalence
\begin{equation*}
  \Phi\text-\mathrm{Cocts}(\Phi(X),A)\xrightarrow{\simeq}[X,A]
\end{equation*}
between the categories of $\Phi$-cocontinuous functors $\Phi(X)\to A$
and the category of functors $X\to A$, for all $\Phi$-cocomplete
categories $A$. This universal property determines $\Phi(X)$ only up
to a unique up to isomorphism
equivalence of categories

The free completion $\Phi(X)$ can be constructed as the closure under
$\Phi$-colimits of the representable functors in $[X^\op,\V]$. That
is the smallest replete full subcategory closed
under $\Phi$-colimits and containing the
representables. 
The universal $X\to \Phi(X)$ is the co-restriction of the Yoneda
embedding. See \cite[Section 5.7]{Kelly:BCECT}.
% For example, if $R$ is a \kk-algebra and $\Sigma R$ is the one-object
% category with unique hom equal to $R$ and composition and identity
% given by multiplication and unit of $R$, one has that $[\Sigma
% R^\op,\V]\simeq R\text-\mathrm{Mod}$, and thus $\fin(\Sigma R)\simeq
% R\text-\mathrm{Mod}_f$. 

% A category $A$ has $\Phi$-colimits if and only if the universal $A\to
% \Phi(A)$ has a left adjoint. \marginpar{not obvious.}

\subsubsection{Finite colimits}
\label{sec:finite-colimits}

Of special interest for us is the class of finite colimits \fin,
formed by the finite categories. A finite colimit is exactly a
\fin-colimit. (In a setting of
categories enriched in a more general 
category \V\ usual conical colimits are not enough and one is lead to
consider weighted colimits \cite[Ch. 3]{Kelly:BCECT}
---previously known as indexed colimits.)
The completion of $X$ under finite colimits is the
smallest replete full
subcategory $\fin(X)\subset[X^\op,\V]$ closed under finite colimits
and containing the representables. Since the category $[X^\op,\V]$
is locally finitely presentable and the representable presheaves form
a strong generator, $\fin(X)$ is the full subcategory of
its 
finitely presentable objects, {\em i.e.}, those $\phi$ such that
$[X^\op,\V](\phi,-)$ preserves filtered colimits (see Theorem
\ref{thm:8} and
\cite{MR0327863,Kelly:StructuresFiniteLimits}).  
For example, if $R$ is a
\kk-algebra and $\Sigma R$ is the associated one object \kk-category,
then $\fin(\Sigma R)$ is equivalent to $R\text-\mathrm{Mod}_f$, the
category of finitely presentable $R$-modules.

Observe that the full subcategory $X\hookrightarrow\fin(X)$ is dense (and in
particular a strong generator) and consists of projective
objects. Conversely, if a finitely cocomplete category $A$ has a
strong generating full subcategory $X\subset A$ consisting of
projective objects then $A\simeq\fin(X)$.

The following lemma, which will be needed later, holds because 
the category \V\ is extremely well behaved, in our case a category of
modules over a commutative ring. The proof is along the lines of
\cite[8.11]{Kelly:StructuresFiniteLimits}.

\begin{lemma}\label{l:8}
  For any small category $X$, any $\phi\in\fin(X)\subset [X^\op,\V]$ can
  be obtained as the coequaliser of a pair of
  arrows between finite coproducts of representable functors.
\end{lemma}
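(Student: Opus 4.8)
My plan is to identify $\fin(X)$ with the full subcategory $\mathcal{C}\subseteq[X^\op,\V]$ whose objects are the presheaves that can be written as the coequaliser of a parallel pair of maps between finite coproducts of representables, the crux being to show that $\mathcal{C}$ is already closed under finite colimits inside $[X^\op,\V]$. Once that is known, the lemma follows at once: $\mathcal{C}$ contains every finite coproduct $Y$ of representables (for instance $Y=\mathrm{coeq}(0\rightrightarrows Y)$), in particular every representable and the zero object, and $\fin(X)$ is \emph{by definition} the smallest replete full subcategory closed under finite colimits and containing the representables, so $\fin(X)\subseteq\mathcal{C}$. (The reverse inclusion holds too, $\fin(X)$ being closed under finite colimits and containing the representables, so in fact $\fin(X)=\mathcal{C}$.)

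To verify the closure I would use two standing facts about the ambient category: $[X^\op,\V]$ is abelian with limits and colimits computed pointwise, and the representables are projective in $[X^\op,\V]$ --- by the Yoneda lemma $[X^\op,\V](X(-,x),-)$ is evaluation at $x$, which preserves epimorphisms --- hence every finite coproduct of representables is projective as well. Since finite colimits are built from finite coproducts (including the empty one) and coequalisers of parallel pairs, it suffices to show $\mathcal{C}$ is closed under these two operations. Closure under finite coproducts is immediate from the fact that colimits commute with colimits: if $\phi_i=\mathrm{coeq}(R_i\rightrightarrows S_i)$ with $R_i,S_i$ finite coproducts of representables for $i=1,\dots,n$, then $\bigoplus_i\phi_i\cong\mathrm{coeq}(\bigoplus_iR_i\rightrightarrows\bigoplus_iS_i)$, and the two outer objects are again finite coproducts of representables.

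The substantive case is closure under coequalisers. Given a parallel pair $f,g\colon\phi\rightrightarrows\psi$ with $\phi,\psi\in\mathcal{C}$, I would write $\psi=\mathrm{coeq}(a,b\colon R'\rightrightarrows S')$ with coequalising (hence epimorphic) map $q'\colon S'\to\psi$, and use the coequaliser presentation of $\phi$ to obtain an epimorphism $q\colon S\to\phi$ with $S$ a finite coproduct of representables. Because $S$ is projective and $q'$ is an epimorphism, the composites $fq$ and $gq$ lift through $q'$ to maps $\tilde f,\tilde g\colon S\to S'$. Now, in the abelian category $[X^\op,\V]$, $\mathrm{coeq}(f,g)$ is the cokernel of $f-g$, and since $q$ is an epimorphism it is equally the cokernel of $(f-g)q=q'(\tilde f-\tilde g)\colon S\to\psi$; unwinding $\psi=S'/\mathrm{im}(a-b)$, a routine diagram chase identifies this cokernel with $\mathrm{coeq}\bigl((a,\tilde f),(b,\tilde g)\colon R'\oplus S\rightrightarrows S'\bigr)$, which displays $\mathrm{coeq}(f,g)$ as an object of $\mathcal{C}$.

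The lifting step in the last paragraph is the only genuinely non-formal point, and it is exactly where projectivity of the finite coproducts of representables is used; it is also what lets the argument bypass the naive approach of presenting $\phi$ and then presenting the kernel of that presentation, which would force the kernel to be finitely generated --- a coherence-type condition that need not hold in general. The scheme of the proof is that of \cite[8.11]{Kelly:StructuresFiniteLimits}.
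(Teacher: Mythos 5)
Your proof is correct. The paper gives no argument for this lemma beyond the remark that it goes ``along the lines of'' \cite[8.11]{Kelly:StructuresFiniteLimits}, and your argument --- showing that the presheaves admitting such a presentation form a replete full subcategory containing the representables and closed under finite coproducts and under coequalisers, the latter step using projectivity of finite coproducts of representables to lift along the given presentations --- is exactly the scheme of that reference, so it supplies the omitted proof in essentially the intended way.
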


% \begin{proof}
%   This is a consequence of
%   \cite[8.11]{Kelly:StructuresFiniteLimits}. To save space write
%   $P(X)$ for 
%   $[X^\op,\V]$ and identify $X$ with the full subcategory of
%   representable presheaves. We regard $P(X)$ as an ordinary category,
%   not a category enriched in \V. An inspection of the definition of a
%   {\em generator}\/ and of a {\em regular generator} in
%   \cite[8.7]{Kelly:StructuresFiniteLimits} shows that the two concepts
%   coincide in any category where every epimorphism is regular ({\em
%     i.e.}, a coequaliser). This is
%   certainly true in \V\ and hence in $P(X)$. From this discussion it
%   follows that $X$ is a regular generator in $P(X)$. Now simply apply
%   \cite[8.11]{Kelly:StructuresFiniteLimits}, showing that any finitely
%   presentable object of $P(X)$, that is any object of $\fin(X)$, is a
%   coequaliser of a pair of arrows between finite coproducts of
%   representable presheaves.   
% \end{proof}

% Given a full inclusion $K:X\hookrightarrow{}A$, there exists an
% equivalence as in the diagram below if and only if $A$ has finite
% colimits and $X$ is a strong
% generator consisting of projective objects. Recall that $X$ is a
% strong generator when $A\to [X^\op,\V]$ given by
% $a\mapsto A(K-,a)$ reflects isomorphisms. The proof is completely
% analogous to \cite[Thm. 5.26]{Kelly:BCECT}. \marginpar{Need this?}
% \begin{equation*}
%   \label{eq:4}
%   \xymatrixrowsep{.5cm}
%   \diagram
%   {}&\fin(X)\ar[dd]^\simeq\\
%   X\ar[ur]\ar[dr]_K\ar@{}[r]|-\cong&\\
%   {}&A
%   \enddiagram
% \end{equation*}

\subsubsection{Filtered colimits}
\label{sec:filtered-colimits}

Another special case of free completion under colimits we shall use is
the one corresponding to filtered colimits. If $X$ is a small category
we write $\Ind X$ for its free completion under filtered colimits and
$X\to \Ind X$ the universal functor. This is equivalent to the
category of ind-objects in \cite[I.8.2]{SGA4}.  We summarise some
results on completions of finitely cocomplete categories under filtered
colimits in the following theorem (see
\cite{MR0327863,Kelly:StructuresFiniteLimits}). 

\begin{thm}\label{thm:10}
  If $X$ has finite 
  colimits then:
  \begin{enumerate}
  \item\label{item:13}
    Its completion under filtered colimits can be obtained as
    the co-restriction of the Yoneda embedding
    \begin{equation}\label{eq:18}
      X\longrightarrow{}\operatorname{Lex}[X^\op,\V]
    \end{equation}
  \item\label{item:14}
    $\Ind X$ is locally finitely finitely presentable (see Theorem \ref{thm:8} and
    \cite{MR0327863,Kelly:StructuresFiniteLimits}).
  \item\label{item:15}
    The universal functor $X\to \Ind X$ preserves finite colimits.
  \item\label{item:16}
    The full subcategory $(\Ind X)_f$ of finitely presentable objects of $\Ind
    X$ is the replete image of \eqref{eq:18}, and thus equivalent to
    $X$. 
  \item\label{item:17}
    The functor \eqref{eq:18} induces an equivalence
    \begin{equation*}
      \mathrm{Cocts}[\Ind X,Y]\simeq \mathrm{Rex}[X,Y]
    \end{equation*}
    for all cocomplete categories $Y$ between the category of
    cocontinuous functors $\Ind X\to Y$ and the one of right exact
    functors $X\to Y$. 
  \end{enumerate}
  Conversely, for any locally finitely presentable category $A$ the
  inclusion $A_f\subset A$ induces an equivalence $A\simeq
  \Ind{A_f}$. 
\end{thm}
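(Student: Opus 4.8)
The plan is to reduce every clause to Theorem~\ref{thm:8} together with the general description of free completions under colimits recalled in Section~\ref{sec:compl-under-colim}; the only clause with genuine content is \ref{item:13}, which identifies $\Ind X$ with $\operatorname{Lex}[X^\op,\V]$, and once this is established the remaining assertions follow by short formal arguments. In essence the whole theorem is contained in \cite{MR0327863,Kelly:StructuresFiniteLimits}.

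For \ref{item:13} I would proceed as follows. Since $X$ is finitely cocomplete, $X^\op$ has finite limits, and each representable $X(-,x)\colon X^\op\to\V$ sends these finite limits --- which are finite colimits of $X$ --- to finite limits in $\V$; hence every representable presheaf is left exact. Because filtered colimits commute with finite limits in $\V=\kmod$, the full subcategory $\operatorname{Lex}[X^\op,\V]$ is closed under filtered colimits in $[X^\op,\V]$. Therefore the closure of the representables under filtered colimits --- which, by Section~\ref{sec:compl-under-colim}, is a concrete model of $\Ind X$ with universal functor the co-restriction of the Yoneda embedding --- is contained in $\operatorname{Lex}[X^\op,\V]$. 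For the reverse inclusion one invokes the classical fact that a left exact (equivalently, flat) $\V$-valued functor on $X^\op$ is a filtered colimit of representables: one takes the canonical expression of a presheaf $\phi$ as a colimit of representables indexed by the comma category $\y\downarrow\phi$ and verifies, using left exactness of $\phi$, that this indexing category is filtered; see \cite{MR0327863,Kelly:StructuresFiniteLimits}. Hence $\operatorname{Lex}[X^\op,\V]$ \emph{is} the closure of the representables under filtered colimits, which proves \ref{item:13}.

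Granting \ref{item:13}, clause \ref{item:14} is Theorem~\ref{thm:8}(\ref{item:12}) applied to $C=X$, and clause \ref{item:16} is the assertion in the same theorem that $\operatorname{Lex}[C^\op,\V]_f$ consists of the representables --- that is, it is the replete image of \eqref{eq:18} --- and is equivalent to $C=X$. For \ref{item:15}, let $d\colon D\to X$ be a finite diagram with colimit $c$. For every $\psi\in\operatorname{Lex}[X^\op,\V]$, fullness of the inclusion $\operatorname{Lex}[X^\op,\V]\subset[X^\op,\V]$, the Yoneda lemma and left exactness of $\psi$ give natural isomorphisms
\begin{equation*}
\operatorname{Lex}[X^\op,\V](\y c,\psi)\cong\psi(c)\cong\lim_{i}\psi(d i)\cong\lim_{i}\operatorname{Lex}[X^\op,\V](\y(d i),\psi),
\end{equation*}
and since $\operatorname{Lex}[X^\op,\V]$ is cocomplete (by \ref{item:14}) the right-hand side is $\operatorname{Lex}[X^\op,\V](\colim_{i}\y(d i),\psi)$; by the Yoneda lemma again, $\y c\cong\colim_{i}\y(d i)$ in $\operatorname{Lex}[X^\op,\V]\simeq\Ind X$, so the universal functor $X\to\Ind X$ preserves finite colimits.

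Finally, for \ref{item:17}, restriction along $X\to\Ind X$ carries a cocontinuous functor to a right exact one by \ref{item:15}, which gives a comparison $\mathrm{Cocts}[\Ind X,Y]\to\mathrm{Rex}[X,Y]$; conversely a right exact $F\colon X\to Y$ extends, by the universal property of the free filtered-colimit completion (using that $Y$ is cocomplete, hence has filtered colimits), to an essentially unique filtered-cocontinuous $\widetilde F\colon\Ind X\to Y$, and a routine verification shows $\widetilde F$ is in fact cocontinuous --- it preserves filtered colimits by construction, it preserves finite colimits of finitely presentable objects because on $(\Ind X)_f\simeq X$ it agrees with the right exact $F$ and $(\Ind X)_f$ is closed in $\Ind X$ under finite colimits, and every small colimit in $\Ind X$ can be rebuilt as a filtered colimit of such finite colimits. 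These two assignments are mutually inverse up to isomorphism since a cocontinuous functor on $\Ind X$ is determined up to isomorphism by its restriction to the finitely presentable objects. The converse statement is then immediate: for $A$ locally finitely presentable, $A_f$ is an essentially small finitely cocomplete category with $A\simeq\operatorname{Lex}[A_f^\op,\V]$ by Theorem~\ref{thm:8}, and \ref{item:13} applied to $X=A_f$ rewrites the latter as $\Ind{A_f}$. I expect the main obstacle to be precisely the ``flat presheaf'' step inside \ref{item:13} --- checking that a left exact presheaf is genuinely a \emph{filtered conical} colimit of representables --- which is where the special good behaviour of $\V=\kmod$ enters and for which one ultimately relies on \cite{MR0327863,Kelly:StructuresFiniteLimits}.
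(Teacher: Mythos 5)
Your proposal is correct and matches the paper's treatment: the paper states Theorem~\ref{thm:10} without proof, as a summary of classical results of Gabriel--Ulmer and Kelly \cite{MR0327863,Kelly:StructuresFiniteLimits}, and your outline is exactly the standard argument from those sources --- identify $\Ind X$ with $\operatorname{Lex}[X^\op,\V]$ via closure of the representables under filtered colimits (with the flatness/filtered-colimit-of-representables step carrying the real content), then deduce the remaining clauses formally from Theorem~\ref{thm:8}. You also correctly isolate the one step that genuinely uses the good behaviour of $\V=\kmod$.
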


% \subsection{Abelian categories}
% \label{sec:abelian-categories}

% An abelian category is a category enriched in $\V=\kmod$ (a \kk-linear
% category) that is
% finitely complete, finitely cocomplete and in which every arrow can be
% factorised as a cokernel followed by a kernel.
% We shall later use the following well-known result. 
% \begin{prop}\label{prop:1}
%   Suppose that the small \V-category $X$ has finite limits. Then, its completion
%   under finite colimits $\fin(X)$ is an abelian category. Moreover,
%   the universal functor $X\to\fin(X)$ preserves finite limits.
% \end{prop}

% \begin{rmk}\label{rmk:3}
%   In the proposition above the roles of finite colimits and finite
%   limits can be interchanged. Namely, if $X\to L(X)$ is a completion of
%   $X$ under finite limits ---in the sense that, for each finitely
%   complete $C$, it induces an equivalence between the categories of
%   left exact functors $L(X)\to C$ and of functors $X\to C$--- and $X$ is
%   finitely cocomplete, then $L(X)$ is abelian and $X\to L(X)$ preserves
%   finite colimits. To deduce this from Proposition \ref{prop:1}
%   observe that the completion of $X$ under finite limits can be taken
%   as $X \to\fin(X^\op)^\op$. 
% \end{rmk}

\subsection{Bicolimits and biadjunctions}
\label{sec:bicolimits}

In Section \ref{sec:an-existence-result} we shall need the notion of a {\em bicolimit}\/ of a
pseudofunctor, which is a weakening of the usual definition of
colimit. Bicategories, pseudofunctors, pseudonatural transformations
and modifications between them where introduced in
\cite{Benabou.bicat}. All our bicategories will be strict or 2-categories:
the category of small \V-categories, \V-functors and \V-natural
transformations \VCat; 
the 2-category of finitely cocomplete \V-categories, right exact
\V-functors and \V-natural transformations $\mathbf{Rex}$;
the 2-category of abelian categories, exact functors and transformations.

A pseudofunctor $F:\mathscr K\to\mathscr L$ between 2-categories
preserves composition and identities only up to coherent
isomorphisms. Similarly, a pseudonatural transformation $\sigma:F\Rightarrow
G:\mathscr K\to\mathscr L$ is given by components $\sigma_X:FX\to GX$
and for each $f:X\to Y$ in $\mathscr K$ an isomorphism
$\sigma_Y.(Ff)\cong (Gf).\sigma_X$ satisfying coherence
conditions. There is a notion of morphism between pseudonatural
transformations, or modification. We refer the reader to the original
\cite{Benabou.bicat}, or for a more recent reference
\cite{pre05659661}. 

Bi(co)limits appear in \cite{fibrations} in the full
generality of bicategories ---see also
\cite{2categoricallimits}. Bicolimits of pseudofunctors $J\to\Cat$
where $J$ is a category already appear in \cite{SGA4}. 
If $F:\mathscr K\to \mathscr L$ is a pseudofunctor, a bicolimit of $F$ is a
category $\operatorname{bicolim}F$ together with a pseudonatural
transformation into a constant pseudofunctor
$F\Rightarrow \Delta\operatorname{bicolim}F$ that induces equivalences
$\mathscr L(\operatorname{bicolim}F,Z)\simeq \mathbf{Ps}(F,\Delta Z)$
where $\Delta Z$ is the constant pseudofunctor on the object $Z$ and
the category on the right hand side is the category of pseudonatural
transformations and modifications from $F$ to $\Delta$.

A biadjunction between two pseudofunctors $F:\mathscr
K\rightleftarrows \mathscr L$ is an equivalence $\mathscr
L(FX,Z)\simeq\mathscr K(X,GZ)$, pseudonatural in $X,Z$. We say that 
$F$ is a left biadjoint of $G$ and $G$ a right biadjoint of $F$.

If $F$ has a right biadjoint, then it preserves any bicolimit that may
exist in its domain, a fact that will be used in the proof of
Proposition \ref{prop:4}.

\subsection{Tensor product of finitely cocomplete categories}
\label{sec:tens-prod-finit}
This section gives a construction of a tensor product of
finitely cocomplete categories, following
\cite{Kelly:StructuresFiniteLimits,Kelly:BCECTrep}.  The base category
\V\ can be any locally finitely presentable symmetric monoidal 
closed category, in which case one must employ
weighted colimits. However, for our purposes \V\ will be \kmod.

\begin{thm}[\cite{Kelly:StructuresFiniteLimits}]
  \label{thm:9}
  Given two small finitely cocomplete categories $A,B$, there is a
  another $A\boxtimes B$ and a functor $\chi:A\otimes B\to A\boxtimes B$ that
  is right exact in each variable and universal with
  this property, in the following sense. For each finitely cocomplete
  category $C$, composition with $\chi$ induces an
  equivalence
  \begin{equation}\label{eq:2}
    \Rex[A\boxtimes B,C]\xrightarrow{\simeq}\Rex[A,B;C]\cong\Rex[A,\Rex[B,C]]
  \end{equation}
  between the category of right exact functors $A\boxtimes B\to C$ and
  the category of functors $A\otimes B\to C$ right exact in each
  variable. This universal property determines $A\boxtimes B$ up to a
  unique up to a unique isomorphism equivalence.
\end{thm}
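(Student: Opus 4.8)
The plan is to reduce Theorem~\ref{thm:9} to a single representability statement and then construct the representing object by freely completing $A\otimes B$ under finite colimits subject to right exactness in each variable. First I would dispose of the second equivalence in \eqref{eq:2}, which is purely formal: since $C$ is finitely cocomplete, finite colimits in the functor category $[B,C]$ are computed pointwise and a pointwise finite colimit of right exact functors is again right exact, so $\Rex[B,C]$ is finitely cocomplete with finite colimits created by evaluation. Hence a functor $A\to\Rex[B,C]$ is right exact exactly when its transpose $A\otimes B\to C$ is right exact in the first variable, which gives $\Rex[A,\Rex[B,C]]\simeq\Rex[A,B;C]$ pseudonaturally in $C$. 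It therefore suffices to produce an essentially small finitely cocomplete category $A\boxtimes B$ together with a functor $\chi:A\otimes B\to A\boxtimes B$ that is right exact in each variable and satisfies $\Rex[A\boxtimes B,C]\simeq\Rex[A,B;C]$ pseudonaturally in the finitely cocomplete category $C$; the asserted uniqueness up to equivalence is then automatic, the representing object of a pseudofunctor being determined up to equivalence.

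To build $A\boxtimes B$ I would work inside locally finitely presentable categories. Let $\mathcal D=[(A\otimes B)^\op,\V]$ be the free cocompletion of $A\otimes B$; it is locally finitely presentable with $\mathcal D_f\simeq\fin(A\otimes B)$, and $\mathrm{Cocts}[\mathcal D,\mathcal G]\simeq[A\otimes B,\mathcal G]$ naturally for every cocomplete $\mathcal G$. Inside $\mathcal D$ take the set $\Sigma$ of canonical comparison morphisms $\colim_i\y(a_i,b)\to\y(\colim_i a_i,b)$ (the left-hand colimit formed in $\mathcal D$, the right-hand one in $A$), indexed by finite diagrams $i\mapsto a_i$ in $A$ and objects $b\in B$ (where it suffices to treat the empty diagram, binary coproducts and parallel pairs, which keeps $\Sigma$ a set), together with the symmetric family obtained by interchanging the roles of $A$ and $B$. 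Let $\mathcal E=\Sigma^\perp\hookrightarrow\mathcal D$ be the resulting reflective localisation, with reflector $L$. Since $\Sigma$ consists of maps between finitely presentable objects, $\mathcal E$ is again locally finitely presentable and $L$ preserves finitely presentable objects; moreover $\mathcal E_f$ is the closure under finite colimits of $L(\mathcal D_f)$, which, since $L$ preserves finite colimits and $\mathcal D_f$ is the finite-colimit closure of the representables, equals the closure under finite colimits of the image of $A\otimes B$ under $L\y$. I would then set $A\boxtimes B:=\mathcal E_f$. This is essentially small; it is finitely cocomplete, being closed under finite colimits in the cocomplete category $\mathcal E$; and by construction it is generated under finite colimits by the image of the functor $\chi:A\otimes B\to A\boxtimes B$ obtained by corestricting $L\y$. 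A routine unwinding shows that $\chi$ is right exact in each variable (exactly what the inversion of $\Sigma$ achieves) and, more importantly, that for every cocomplete $\mathcal G$ the universal property of the localisation, combined with $\mathrm{Cocts}[\mathcal D,\mathcal G]\simeq[A\otimes B,\mathcal G]$, yields $\mathrm{Cocts}[\mathcal E,\mathcal G]\simeq\Rex[A,B;\mathcal G]$, the full subcategory of $[A\otimes B,\mathcal G]$ on the functors right exact in each variable.

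It remains to read off the universal property for a finitely cocomplete category $C$. Because $C=(\Ind C)_f$ is closed under finite colimits in $\Ind C$, the inclusion $C\hookrightarrow\Ind C$ preserves and reflects finite colimits, so postcomposition with it identifies $\Rex[A\boxtimes B,C]$ with the full subcategory of $\Rex[A\boxtimes B,\Ind C]$ on the functors taking values in $C$. By Theorem~\ref{thm:10}, $\Rex[A\boxtimes B,\Ind C]\simeq\mathrm{Cocts}[\Ind(A\boxtimes B),\Ind C]$, and its converse part identifies $\Ind(A\boxtimes B)=\Ind(\mathcal E_f)$ with the locally finitely presentable category $\mathcal E$; thus this is $\mathrm{Cocts}[\mathcal E,\Ind C]$, which by the previous paragraph is $\Rex[A,B;\Ind C]$. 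Since $A\boxtimes B$ is generated under finite colimits by the image of $\chi$ and $C$ is closed under finite colimits in $\Ind C$, a right exact functor $A\boxtimes B\to\Ind C$ takes values in $C$ precisely when the associated bilinear functor $A\otimes B\to\Ind C$ does; cutting down accordingly gives $\Rex[A\boxtimes B,C]\simeq\Rex[A,B;C]$, pseudonaturally in $C$. Combined with the first paragraph this also produces the equivalence $\Rex[A\boxtimes B,C]\simeq\Rex[A,\Rex[B,C]]$ of \eqref{eq:2}.

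The step I expect to give the most trouble is the bookkeeping of finitely presentable objects through the successive passages from $A\otimes B$ to $\mathcal D$, to $\mathcal E$, and to $\mathcal E_f$: it rests on the standard but not entirely trivial facts that localising a locally finitely presentable category at a \emph{set} of maps between finitely presentable objects is again locally finitely presentable, that its reflector preserves finitely presentable objects, and that consequently its finitely presentable objects form precisely the closure under finite colimits of the images of the old ones (which here one wants described concretely through $\chi$). One may instead perform the localisation at the level of the small category $\fin(A\otimes B)$, using Lemma~\ref{l:8} to present its objects as coequalisers of maps between finite coproducts of representables and tracking these presentations through the inversion of the comparison maps; this is more elementary but needs the same underlying estimates, and the universal property then emerges from a direct diagram chase that reconstructs a right exact functor on $A\boxtimes B$ from its restriction along $\chi$ and checks that the resulting comparison functor is fully faithful and essentially surjective.
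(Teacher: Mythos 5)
Your construction produces exactly the same object as the paper's: your orthogonality class $\Sigma^\perp$ is precisely $\operatorname{Lex}[A^\op,B^\op;\V]$ (a presheaf is orthogonal to the comparison maps $\colim_i\y(a_i,b)\to\y(\colim_i a_i,b)$ and their symmetric counterparts exactly when it is left exact in each variable), so your $\mathcal E_f$ is the paper's closure under finite colimits of the image of \eqref{eq:10}. The difference is in how the universal property is verified. The paper does not prove Theorem \ref{thm:9} at all --- it cites Kelly, proves only that the universal functor is dense and right exact in each variable (Lemma \ref{l:11}), and indicates that the inverse equivalence is given by left Kan extension along the dense functor $\chi$. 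You instead run everything through locally finitely presentable category theory: identify $\mathrm{Cocts}[\mathcal E,\mathcal G]$ with $\Rex[A,B;\mathcal G]$ for cocomplete $\mathcal G$ via the localisation, then cut down to a finitely cocomplete $C$ by embedding it in $\Ind C$. This is a legitimate and arguably more transparent route, and the bookkeeping you flag as delicate does go through (the inclusion $\Sigma^\perp\hookrightarrow\mathcal D$ preserves filtered colimits because $\Sigma$ lies between finitely presentable objects, whence $L$ preserves finitely presentables; and every finitely presentable object of $\mathcal E$ is a retract of some $L d$ with $d\in\mathcal D_f$, which in the $\kk$-linear cocomplete setting is again a finite colimit since idempotents split as coequalisers). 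Two points you should still nail down: the passage through $\Ind C$ requires $C$ to be essentially small, whereas the statement allows arbitrary finitely cocomplete $C$ --- for large $C$ you need either the direct Kan-extension argument along the dense $\chi$ (the paper's/Kelly's route) or a remark that the universal property for small $C$ already determines the general case; and the claim that $\chi$ is right exact in each variable deserves the one-line computation $L(RY-,\phi)\cong\phi$ of Lemma \ref{l:11} rather than ``routine unwinding''.
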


The finitely cocomplete categories $\Rex[A,B]$ make the 2-category of
finitely cocomplete categories, right exact functors and natural
transformations into a {\em pseudo-closed 2-category}\/
\cite{Hyland:PsdComm}. The tensor product $\boxtimes$ is part of a
monoidal 2-category structure in the weak sense of
\cite{tricategories,McCrudden:BalancedCoalg}. The unit object of this
monoidal structure is the category $\V_f$ of finitely presentable
objects of $\V$: we have canonical equivalences $\V_f\boxtimes A\simeq
A\simeq A\boxtimes V_f$. 

Below we recall a construction of $A\boxtimes B$ provided in
\cite{Kelly:BCECT,Kelly:StructuresFiniteLimits} but do not include the
proof of the theorem, which is lengthily and better understood in a
slightly more general context.

The full
subcategory $\operatorname{Lex}[A^\op,B^\op,\V]$ of $[(A\otimes
B)^\op,\V]$ of functors $A^\op\otimes B^\op \to \V$ left exact in each
variable is reflexive ---{\em i.e.}, the inclusion functor has a
left adjoint--- \cite[Theorem 6.11]{Kelly:BCECT}. In particular
$\operatorname{Lex}[A^\op,B^\op;\V]$ is complete and
cocomplete. Moreover, the reflection is fully faithful if and only if
$\operatorname{Lex}[A^\op,B^\op;\V]$ contains the representables if
and only if 
for each $a\in A,b\in B$ the functors $A\to A\otimes B$, $B\to
A\otimes B$ given by $x\mapsto (x,b)$ and $y\mapsto (a,y)$
preserve finite colimits. 

One obtains a functor
\begin{equation}
  \label{eq:10}
  A\otimes B\xrightarrow{Y} [(A\otimes B)^\op,\V]\xrightarrow{R} \operatorname{Lex}[A^\op,B^\op;\V]
\end{equation}
composition of the Yoneda embedding and the reflection. The following
result can be deduced from \cite[Ch. 6]{Kelly:BCECT} but we provide a
direct proof. 

\begin{lemma}
  \label{l:11}
  The functor \eqref{eq:10} is dense and right exact in each
  variable. 
\end{lemma}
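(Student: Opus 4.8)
The statement has two parts: density of the composite \eqref{eq:10}, and right exactness in each variable. I would treat them separately, exploiting the description of $\operatorname{Lex}[A^\op,B^\op;\V]$ as a reflective subcategory of the presheaf category $[(A\otimes B)^\op,\V]$.

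For density, recall that $Y\colon A\otimes B\to[(A\otimes B)^\op,\V]$ is dense (Yoneda is always dense), and that a reflection $R$ onto a full reflective subcategory, when restricted along a dense functor, stays dense provided the reflection is fully faithful — which by the discussion just above the lemma is equivalent to the hypothesis that $x\mapsto(x,b)$ and $y\mapsto(a,y)$ preserve finite colimits, part of the standing setup for $A\boxtimes B$. So the first step is: given that the reflector $R$ is fully faithful on the representables (indeed an equivalence onto its image of representables), deduce that $R\circ Y$ is dense in $\operatorname{Lex}[A^\op,B^\op;\V]$. Concretely, for $\phi\in\operatorname{Lex}[A^\op,B^\op;\V]$ one writes $\phi$ as a colimit of representables in the presheaf category (the canonical colimit over its category of elements), applies $R$ (a left adjoint, hence cocontinuous), and uses full faithfulness of the inclusion to identify $\phi\cong R\phi$ with the resulting colimit of the $R(Y(a,b))$; then one checks the density comparison map is invertible, which amounts to $\operatorname{Lex}[A^\op,B^\op;\V]\bigl(R(Y(a,b)),\phi\bigr)\cong\phi(a,b)$, i.e. the adjunction composed with Yoneda. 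This is essentially bookkeeping once the fully-faithfulness is in hand.

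For right exactness in each variable, fix $b\in B$ and consider the functor $A\to\operatorname{Lex}[A^\op,B^\op;\V]$, $a\mapsto R(Y(a,b))$. I would factor this as $A\xrightarrow{(-,b)} A\otimes B\xrightarrow{Y}[(A\otimes B)^\op,\V]\xrightarrow{R}\operatorname{Lex}[A^\op,B^\op;\V]$. The functor $(-,b)$ preserves finite colimits by hypothesis; $Y$ does not preserve colimits in general, so the naive argument fails — this is the main obstacle. The fix is to observe that the composite $R\circ Y$ sends $(a,b')$ to the object of $\operatorname{Lex}$ that represents the functor $\phi\mapsto\phi(a,b')$, and that evaluation at a fixed pair $(a,b')$ is a cocontinuous functor $\operatorname{Lex}[A^\op,B^\op;\V]\to\V$ (colimits in a reflective subcategory of a presheaf category that is closed under the relevant colimits are computed pointwise — here filtered colimits and all colimits are pointwise in the presheaf category and the subcategory is closed under them after reflection). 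Hence to show $a\mapsto R(Y(a,b))$ preserves a finite colimit $a=\operatorname{colim}_i a_i$, it suffices to show that for every $\phi$, $\operatorname{Lex}(R(Y(a,b)),\phi)\cong\phi(a,b)$ carries that colimit to the corresponding limit, i.e. $\phi(\operatorname{colim}_i a_i, b)\cong\lim_i\phi(a_i,b)$ — which is exactly the statement that $\phi$ is left exact in its first variable, built into the definition of $\operatorname{Lex}[A^\op,B^\op;\V]$. By symmetry the same works in the second variable.

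So the structure is: (1) unwind the adjunction to get $\operatorname{Lex}(R(Y(a,b)),\phi)\cong\phi(a,b)$, naturally in all variables; (2) deduce density by writing an arbitrary $\phi$ as a colimit of representables and applying the cocontinuous $R$; (3) deduce right exactness in each variable from left exactness of each $\phi\in\operatorname{Lex}[A^\op,B^\op;\V]$ in the corresponding variable, using that the representable functors $\operatorname{Lex}(-,\phi)$ jointly reflect (indeed detect) colimits since the $R(Y(a,b))$ are dense. The delicate point throughout is that $Y$ is not cocontinuous, so every colimit-preservation claim must be routed through the Yoneda-type identification in step (1) rather than through $Y$ directly; once that identification is established, both halves of the lemma are short.
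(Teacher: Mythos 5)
Your argument is, at its core, the same as the paper's: both halves of the lemma are driven by the identification $L(RY(a,b),\phi)\cong[(A\otimes B)^\op,\V](Y(a,b),\phi)\cong\phi(a,b)$ (reflection adjunction plus Yoneda), from which right exactness in each variable reduces to left exactness of each $\phi\in L$ in each variable, using that the representables $L(-,\phi)$ jointly reflect colimits, while density comes from exhibiting each $\phi$ as the $\phi$-weighted colimit of the $RY(a,b)$. The paper packages the density half by citing Kelly's criterion ($R\cong\operatorname{Lan}_Y(RY)$ and reflections are dense), whereas you unwind it by hand; that difference is cosmetic (though note that in the $\kmod$-enriched setting the canonical presentation of $\phi$ is the $\phi$-weighted colimit of $Y$, not literally a conical colimit over a category of elements).

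Two of your supporting claims are, however, false and should be deleted; fortunately neither is load-bearing. First, fully faithfulness of the reflection on representables --- equivalently, that $x\mapsto(x,b)$ and $y\mapsto(a,y)$ preserve finite colimits --- is \emph{not} part of the standing setup: Remark \ref{rmk:6} records it as holding only under extra hypotheses (flat hom-objects, absolute colimits), and Lemma \ref{l:11} is stated and used without it. So the ``general principle'' you invoke for density is not applicable as stated. Your concrete argument survives because it only needs full faithfulness of the inclusion $L\hookrightarrow[(A\otimes B)^\op,\V]$, which always holds and gives $R\phi\cong\phi$ for $\phi\in L$; just drop the unwarranted hypothesis. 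Second, evaluation at a fixed pair $(a,b')$ is \emph{not} cocontinuous on $L$: only filtered colimits of functors left exact in each variable are computed pointwise, while finite colimits in $L$ are obtained by reflecting the pointwise ones. Again this claim plays no role in the step it purports to justify --- what you actually use there is that the contravariant representables $L(-,\phi)$ jointly reflect colimits, which is correct and is exactly the paper's argument.
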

\begin{proof}
  Let us write $L=\operatorname{Lex}[A^\op,B^\op;\V]$. 
  The density of \eqref{eq:10} follows from
  \cite[Prop. 5.7]{Kelly:BCECT} since $R$ is a left Kan extension of
  $RY$ along $Y$ and all reflections are dense.

  The functor $RY$ of \eqref{eq:10} is right exact in each variable if and only
  if $L(RY-,\phi):(A\otimes B)^\op\to\V$ is left exact in each
  variable for each $\phi\in L$. But
  \begin{equation*}
    L(RY-,\phi)\cong [(A\otimes  B)^\op,\V](Y-,\phi)\cong\phi \in L
  \end{equation*}
  where the first isomorphism is the one of the reflection and the
  second is the one provided by the Yoneda lemma. It follows that $RY$
  is right exact in each variable. 
\end{proof}

Define
$A\boxtimes B$ as the closure under finite colimits of the image of the
functor \eqref{eq:10}; this means that $A\boxtimes B$ is the smallest
replete ---{\em i.e.}, full and closed under isomorphisms--- subcategory
of $\operatorname{Lex}[A^\op,B^\op;\V]$ containing the image of
\eqref{eq:10} and closed under finite colimits. The functor
\begin{equation}\label{eq:11}
  \chi:  A\otimes B\to A\boxtimes B
\end{equation}
obtained by co-restriction of \eqref{eq:10} preserves finite colimits
in each variable and satisfies the universal property of Theorem \ref{thm:9}.
Explicitly, given $F:A\otimes B\to C$ right
exact in each variable the corresponding
right exact functor $A\boxtimes B\to C$ is a left Kan extension of $F$
along $A\otimes B\to A\boxtimes B$. This universal functor is dense,
and hence so is the full subcategory of $A\boxtimes B$ consisting of
objects of the form $\chi(a,b)$ ---by \cite[Thm. 5.13]{Kelly:BCECT}.

\begin{rmk}
  \label{rmk:6}
  The universal functor \eqref{eq:11} 
  is fully faithful if and only if the functors $A\to
  A\otimes B$, $B\to A\otimes B$ given by $x\mapsto (x,b)$ and
  $y\mapsto (a,y)$ respectively preserve finite colimits. In
  particular, it is fully faithful if either of the following holds:
  \begin{itemize}
  \item
  The hom-objects of $A,B$ are
  flat objects in \V. For example, when the tensor product of \V\
  preserves finite limits ({\em e.g.}, $\V=\mathbf{Vect}$).
  \item
    Finite colimits in $A,B$ are
    absolute, {\em i.e.}, preserved by any functor. For example, if
    $A,B$ are semisimple categories.  
  \end{itemize}
\end{rmk}

\begin{rmk}\label{rmk:4}
  Being cocomplete with a small dense subcategory of finitely
  presentable objects, % ---given by the image
  % of \eqref{eq:10} \cite[Thm. 5.13]{Kelly:BCECT}---
  $\operatorname{Lex}[A^\op,B^\op;\V]$ is locally finitely
  presentable ---see Section \ref{sec:locally-finit-pres}. Its full
  subcategory of finitely presentable objects is $A\boxtimes B$, by
  Theorem \ref{thm:8}.
\end{rmk}

% The fact $A\boxtimes B$ is the closure under finite colimits in
% $\operatorname{Lex}[A^\op\otimes B^\op;\V]$ of the image of
% \eqref{eq:10}  does not necessarily mean that each object of $A\boxtimes B$ is a 
% colimit of a {\em finite}\/ diagram of objects from $A\otimes
% B$. Density means that for each object $x$ of $A\boxtimes B$ the
% canonical
% cone formed by all the arrows 
% \begin{equation*}
%   \chi(a,b)\longrightarrow x
%   % \int^{a,b}(A\boxtimes B)(\chi(a,b),x)\otimes \chi(a,b)
%   % \xrightarrow{\cong} x
% \end{equation*}
% is a colimiting cone, and this colimit is preserved by each representable functor
% $(A\boxtimes B)(\chi(a,b),-)$. 

\begin{ex}\label{ex:1}
  Given the categories of finitely presentable modules of two
  $\kk$-algebras $\mathsf A,\mathsf B$, the functor 
  \begin{equation*}
    \mathsf{A}\text-\mathrm{Mod}_f\otimes \mathsf{B}\text-\mathrm{Mod}_f
    \xrightarrow{\otimes_{\kk}}
    (\mathsf{A}\otimes\mathsf{B})\text-\mathrm{Mod}_f
  \end{equation*}
  satisfies the universal property \eqref{eq:2}, so
  \begin{equation*}
    \mathsf A\text-\mathrm{Mod}_f\boxtimes \mathsf
    B\text-\mathrm{Mod}_f\simeq
    (\mathsf A\otimes\mathsf B)\text-\mathrm{Mod}_f.
  \end{equation*}

\end{ex}

\begin{rmk}
  \label{rmk:1}
  It is a completely formal consequence of the definitions that there
  are canonical equivalences
  $\fin(X)\boxtimes\fin(Y)\simeq\fin(X\otimes Y)$ and
  $\V_f\simeq\fin(I)$, making $\fin:\VCat\to\mathbf{Rex}$ a monoidal
  pseudofunctor. 
\end{rmk}

The equivalences
\begin{equation}
  \label{eq:12}
  \Rex[A,\Rex[B,C]]\simeq\Rex[A\boxtimes B,C]\simeq\Rex[B,\Rex[A,C]]
\end{equation}
imply that $(-\boxtimes A),(A\boxtimes-):\mathbf{Rex}\to\mathbf{Rex}$
have a right biadjoint $\Rex[A,-]$. Therefore, by the comments at the
end of Section \ref{sec:bicolimits}, the tensor product $\boxtimes$
preserves bicolimits in each variable. 

\section{Abelian categories of multilinear lex functors}
\label{sec:abel-categ-mult}

The question of when the tensor product of categories with finite
colimits of Section \ref{sec:tens-prod-finit} restricts to abelian
categories, in the sense that $A\boxtimes B$ is abelian whenever $A,B$
are abelian, is central to the problem of the
existence of Deligne's tensor product explored in the next
section. For the time being, we consider the following weaker
question: is the free completion under filtered colimits of $A\boxtimes
B$,
\begin{equation*}
  \Ind(A\boxtimes B)\simeq\Lex[A^\op,B^\op;\V]
\end{equation*}
an abelian category if $A,B$ are abelian? This section answers this
question affirmatively by showing that there is a left exact reflection
\begin{equation}
  \label{eq:1}
  \xymatrix{
    \Lex[A^\op,B^\op;\V]\ar@{^(->}@<5pt>_-{\top}[r]&
    [(A\otimes B)^\op,\V]\ar@<5pt>[l]
    }
\end{equation}

Firstly we consider the category $\Lex[C^\op,\V]$, which is well-known
to be reflective in the presheaf category $[C^\op,\V]$ for a very
general \V ---as a
consequence of \cite[Thm. 6.5]{Kelly:BCECT} or \cite[9.7]{Kelly:StructuresFiniteLimits}. However,
because we are assuming that \V\ is a category of modules over a
commutative ring, we can use the even older result that for an abelian
small category $C$, the reflection
\begin{equation}
  \label{eq:3}
  L:[C^\op,\V]\longrightarrow{}\Lex[C^\op,\V]
\end{equation}
is {\em left exact}\/
\cite[Ch. 2]{MR0232821}. Alternatively, we could characterise left
exact \kk-linear functors $C^\op\to\V$ as sheaves for a \kk-linear
Grothendieck topology in $C$ and apply the results in
\cite{MR1394507}. During the preparation of this manuscript
\cite{1211.3678} appeared, where an elementary characterisation
of those finitely cocomplete \kk-linear
categories $C$ for which \eqref{eq:3} is left exact is given.

The proof of the following lemma, due to P. Freyd and posted on the categories
mailing list on the 30th of October 2005, is straightforward.

\begin{lemma}[P. Freyd]
  \label{l:9}
  In an abelian category any exact diagram
    \begin{equation}\label{eq:8}
      \diagram
      x\ar[r]^\lambda\ar[d]_\chi &
      y\ar[r]^\xi\ar[d]_\epsilon&
      q\ar[r]\ar[d]^\varphi&
      0\\
      x'\ar[r]^{\mu}&
      y'\ar[r]^\zeta&
      q'\ar[r]&
      0
      \enddiagram
    \end{equation}
    can be extended to an {\em exact}\/ solid diagram as exhibited in Figure
    \ref{fig:1}.
    \begin{figure}
    $$
    \xymatrixcolsep{1.5cm}
    \diagram
    &0\ar[r]&
    k'\ar[r]    \ar[d]&
    y+x'+x\ar[r]^-{\left(\begin{smallmatrix}1&\epsilon\\0&\mu\\\lambda&0\end{smallmatrix}\right)}
    \ar[d]_{\left(\begin{smallmatrix}1&0\\0&1\\0&0\end{smallmatrix}\right)}
    &
    y+y'\ar[d]_{\left(\begin{smallmatrix}0\\1\end{smallmatrix}\right)}\\
    &0\ar[r]& k\ar[r]\ar[d]&
    y+x'\ar[r]^-{\left(\begin{smallmatrix}\epsilon\\\mu\end{smallmatrix}\right)}
      \ar@{..>}[dl]^{\left(\begin{smallmatrix}\xi\\0\end{smallmatrix}\right)}&
    y'\\
    x\ar[r]^\lambda\ar[d]_\chi& y\ar[r]^\xi\ar[d]_\epsilon& q\ar[r]\ar[d]^\varphi&
    0&\\
    x'\ar[r]^{\mu}& y'\ar[r]^\zeta& q'\ar[r]& 0&
    \enddiagram
    $$
    \caption{}%Extension of the diagram \eqref{eq:8}}
    \label{fig:1}
  \end{figure}
\end{lemma}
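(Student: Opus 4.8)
The plan is to introduce the two new objects of Figure~\ref{fig:1} as kernels and then establish the asserted exactness by two short chases of pseudo-elements; everything else is routine matrix bookkeeping. I will write pseudo-elements of $y+x'+x$ as triples $(a,b,c)$ and of $y+x'$ as pairs $(a,b)$.

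First I would set $k=\ker\bigl(y+x'\to y'\bigr)$, consisting of the $(a,b)$ with $\epsilon a+\mu b=0$, and $k'=\ker\bigl(y+x'+x\to y+y'\bigr)$, consisting of the $(a,b,c)$ with $a+\lambda c=0$ and $\epsilon a+\mu b=0$; then the two top rows of Figure~\ref{fig:1} are exact at $k'$, at $k$ and at their middle terms by the very definition of a kernel. The two right-hand verticals are the split projections erasing the last summand $x$, respectively the first summand $y$, and the square they span commutes by a one-line check. Hence $k'\hookrightarrow y+x'+x\to y+x'$ lands in $k$ and supplies the arrow $k'\to k$, namely $(a,b,c)\mapsto(a,b)$, while $k\to q$ is taken to be the restriction along $k\hookrightarrow y+x'$ of the dotted arrow $(a,b)\mapsto\xi a$. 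The remaining commutativities in Figure~\ref{fig:1} are immediate.

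It then remains to prove that the middle column $k'\to k\to q\xrightarrow{\varphi}q'$ is exact at $k$ and at $q$. It is a complex there: the composite $k'\to k\to q$ vanishes since $(a,b,c)\in k'$ forces $\xi a=\xi(-\lambda c)=0$, and $\varphi$ kills the image of $k\to q$ since for $(a,b)\in k$ one has $\varphi\xi a=\zeta\epsilon a=-\zeta\mu b=0$. For the opposite inclusions I would chase pseudo-elements. Exactness at $q$: given $t$ in $\ker\varphi$, lift it along the epimorphism $\xi$ to some $a$ in $y$; then $\zeta\epsilon a=\varphi\xi a=0$, so $\epsilon a$ lies in $\operatorname{im}\mu=\ker\zeta$, say $\epsilon a=\mu b$, and then $(a,-b)$ lies in $k$ and is sent to $t$ by $k\to q$. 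Exactness at $k$: given $(a,b)$ in $k$ with $\xi a=0$, use $\ker\xi=\operatorname{im}\lambda$ to write $a=\lambda c$; then $(a,b,-c)$ lies in $k'$ and is sent to $(a,b)$ by $k'\to k$.

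The only facts used beyond matrix algebra are that $\xi$ is an epimorphism and that $\ker\xi=\operatorname{im}\lambda$ and $\ker\zeta=\operatorname{im}\mu$ --- that is, precisely the exactness of the two rows of \eqref{eq:8} --- so I anticipate no genuine obstacle; the only point that needs (routine) justification is the legitimacy of the pseudo-element arguments, which holds in any abelian category. This fits the assessment, recalled above, that the proof is straightforward.
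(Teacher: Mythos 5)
The paper does not actually supply a proof of Lemma~\ref{l:9} (it only attributes the statement to Freyd and declares the proof straightforward), so there is no argument to compare yours against; your proof is correct and is plainly the intended one. You read the matrices with the right convention (rows indexing the source summands), define $k$ and $k'$ as the kernels so that the top two rows and the commutativity of the squares are automatic, and your pseudo-element chases establishing exactness of $k'\to k\to q\xrightarrow{\varphi}q'$ at $k$ and at $q$ use exactly the available hypotheses ($\xi$ epi, $\ker\xi=\operatorname{im}\lambda$, $\ker\zeta=\operatorname{im}\mu$, and the commutativity of \eqref{eq:8}), with the standard and legitimate caveat about refining domains when lifting along epimorphisms.
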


\begin{lemma}
  \label{l:1}
  For a finitely cocomplete category $A$, $\Lex[C^\op,\V]$ is abelian
  if and only if the reflection \eqref{eq:3} is left exact.
\end{lemma}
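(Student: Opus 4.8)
The plan is to prove the two implications separately; in both, the real content concerns the behaviour of the reflection $L$ of \eqref{eq:3} on \emph{kernels}. Write $\mathscr E=[C^\op,\V]$. This category is abelian (limits and colimits are pointwise) and locally finitely presentable, with the representables $C(-,c)$ forming a strong generator of finitely presentable objects; moreover each $C(-,c)$ is projective in $\mathscr E$ (the enriched hom $\mathscr E(C(-,c),-)$ is evaluation at $c$, which is exact) and is left exact as a functor $C^\op\to\V$, hence lies in $\mathscr L:=\Lex[C^\op,\V]$. Being reflective in the complete and cocomplete $\mathscr E$, the category $\mathscr L$ is itself complete and cocomplete; its limits --- in particular kernels and finite biproducts --- agree with those of $\mathscr E$, while its cokernels are obtained by applying $L$. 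Since $L$ is a $\V$-functor it is additive, and being a left adjoint it preserves all colimits; as an additive functor between abelian categories is left exact precisely when it preserves kernels, in each direction the only thing to settle is a statement about kernels.

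For the implication ``$L$ left exact $\Rightarrow$ $\mathscr L$ abelian'', I would note that $\mathscr L$ is $\kk$-linear with kernels, cokernels and finite biproducts, so it only remains to check that for every $f\colon\phi\to\psi$ in $\mathscr L$ the canonical comparison $\operatorname{coim}f\to\operatorname{im}f$ is invertible. Starting from the isomorphism $\operatorname{coker}(\ker f)\xrightarrow{\sim}\ker(\operatorname{coker}f)$ in $\mathscr E$ and applying $L$ --- which fixes the objects of $\mathscr L$, preserves cokernels (being a left adjoint) and preserves kernels (by hypothesis) --- one obtains exactly that comparison map in $\mathscr L$, now an isomorphism. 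Hence $\mathscr L$ is abelian. This half is routine.

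The substantial half is ``$\mathscr L$ abelian $\Rightarrow$ $L$ left exact''. Since $\mathscr L$ is locally finitely presentable (Theorem \ref{thm:10}) and $L$ preserves filtered colimits, it suffices by the criterion recalled in Section \ref{sec:locally-finit-pres} to show that $L$ preserves the kernel of an arbitrary arrow $\varphi\colon\phi\to\psi$ between finitely presentable objects of $\mathscr E$. Using Lemma \ref{l:8} and $\kk$-linearity I would present $\phi=\operatorname{coker}(\lambda\colon x\to y)$ and $\psi=\operatorname{coker}(\mu\colon x'\to y')$ with $x,y,x',y'$ finite coproducts of representables; projectivity of the latter lets us lift $\varphi$ to a commuting diagram of shape \eqref{eq:8}. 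Freyd's Lemma \ref{l:9} then exhibits $\ker\varphi$ as $\operatorname{coker}(k'\to k)$, reading off the left-hand portion of Figure \ref{fig:1}, where $k$ and $k'$ are the kernels displayed there, each a kernel of an explicit map between finite coproducts of copies of $x,y,x',y'$. Since $x,y,x',y'$, their finite coproducts, and therefore $k$ and $k'$ all lie in $\mathscr L$, the functor $L$ fixes them, so $L(\ker\varphi)=L\operatorname{coker}(k'\to k)=\operatorname{coker}_{\mathscr L}(k'\to k)$.

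The final step is to apply $L$ to the diagram \eqref{eq:8} itself: this yields a diagram of the same shape in the abelian category $\mathscr L$ --- this is exactly where the hypothesis is used, since Freyd's Lemma \ref{l:9} must now be invoked inside $\mathscr L$ --- with top and bottom rows $x\xrightarrow{\lambda}y\to L\phi\to 0$ and $x'\xrightarrow{\mu}y'\to L\psi\to 0$, using that $L$ preserves cokernels and epimorphisms and fixes $x,y,x',y'$. Freyd's Lemma \ref{l:9} now computes $\ker_{\mathscr L}(L\varphi)$ by the very same recipe $\operatorname{coker}_{\mathscr L}(k'\to k)$, the relevant kernels being again those of $\mathscr E$. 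Comparing, $L(\ker\varphi)\cong\ker_{\mathscr L}(L\varphi)$, so $L$ preserves kernels of arrows between finitely presentable objects, hence $L$ is left exact. I expect this second implication to be the main obstacle: the crux is that Freyd's Lemma \ref{l:9} provides a presentation-level formula for a kernel that is uniform enough to be evaluated identically in $\mathscr E$ and in $\mathscr L$, and that every object it manufactures already lies in the reflective subcategory --- so the two evaluations must agree after reflecting.
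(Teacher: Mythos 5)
Your proposal is correct and follows essentially the same route as the paper's own proof: reduce to kernels of arrows between finitely presentable presheaves via the locally-finitely-presentable criterion, present these via Lemma \ref{l:8} and projectivity of representables as in diagram \eqref{eq:8}, apply Freyd's Lemma \ref{l:9} to compute the kernel as $\operatorname{coker}(k'\to k)$ with $k,k'$ already in $\Lex[C^\op,\V]$, and then rerun the computation in the reflective subcategory using its abelianness. The only cosmetic difference is that you invoke Lemma \ref{l:9} a second time inside $\Lex[C^\op,\V]$, whereas the paper applies $L$ to Figure \ref{fig:1} and reads off the same conclusion from the exactness of the resulting diagram.
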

\begin{proof}
  One direction is clear, namely if the reflection is left exact
  then $L(C)=\Lex[C^\op,\V]$ is abelian. For the converse, recall that
  $P(C)=[C^\op,\V]$ is 
  locally finitely presentable with finitely presentable objects given
  by the category $\fin(C)\hookrightarrow{}[C^\op,V]$. Therefore, the 
  reflection $L$ \eqref{eq:3} preserves kernels if and only if it
  preserves kernels of arrows between objects of $\fin(C)$ ---see the
  last paragraph of Section \ref{sec:locally-finit-pres}. By Lemma
  \ref{l:8}, any object of $\fin(C)$ is a cokernel of an arrow between
  representable functors. An arrow $q\to q'$ in $\fin(C)$ can be fitted into
  a diagram of the form \eqref{eq:8}, where $x,y,x',y'$ are
  representable presheaves, since representables are projective. Now we can use Lemma \ref{l:9} to obtain an exact
  diagram in $P(C)$ as in Figure \ref{fig:1}. In particular,
  $\operatorname{ker}(q\to q')\cong\operatorname{coker}(k'\to k)$.
  Observe that the two top rows in Figure \ref{fig:1} are kernels of
  morphisms between representable presheaves, and therefore the only
  presheaves that may not be left exact are $q,q'$.

  Now apply $L$ to
  the diagram of Figure \ref{fig:1} to obtain a similar one in $L(C)$ that we can see to be
  exact. Indeed, $L$ preserves the two cokernels and preserves two
  kernels because these are kernels in $L(C)$.
  Since $L(C)$ is
  abelian we deduce that 
  $\operatorname{ker}(L\varphi:Lq\to Lq')\cong\operatorname{coker}(Lk'\to Lk)$ in
  $L(C)$. Together with $\operatorname{coker}(Lk'\to Lk)\cong
  L\operatorname{coker}(k'\to k)\cong L\operatorname{ker}(\varphi :q\to q')$ we
  deduce that $L$ preserves the kernel of $\varphi$.
\end{proof}

\begin{cor}
  \label{cor:4}
  Let $A,B$ be finitely cocomplete categories and $\Lex[A^\op,B^\op;\V]$ the
  category of \V-functors $A^\op\otimes B^\op\to\V$ left exact in each
  variable. If both $\Lex[A^\op,\V]$ and $\Lex[B^\op,\V]$ are abelian
  then left adjoint to the inclusion
  \begin{equation}
    \label{eq:32}
    \Lex[A^\op,B^\op;\V]\subseteq[(A\otimes B)^\op,\V]
  \end{equation}
  is left exact and $\Lex[A^\op,B^\op;\V]$ is abelian. This holds, in
  particular, if $A,B$ are abelian. 
\end{cor}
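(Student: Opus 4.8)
The plan is to reduce Corollary~\ref{cor:4} to Lemma~\ref{l:1} applied to the category $C=A\otimes B$. By Lemma~\ref{l:1}, the reflection $L:[(A\otimes B)^\op,\V]\to\Lex[(A\otimes B)^\op,\V]$ is left exact precisely when $\Lex[(A\otimes B)^\op,\V]$ is abelian. So the first task is to separate the two different subcategories of $[(A\otimes B)^\op,\V]$ in play: the functors left exact as functors of one variable on $A\otimes B$, and the functors left exact in each variable separately. These are genuinely different reflective subcategories, and the statement of the corollary concerns the second one, $\Lex[A^\op,B^\op;\V]$, whereas Lemma~\ref{l:1} directly addresses the first. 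So I would first observe that $\Lex[A^\op,B^\op;\V]$ sits inside $\Lex[(A\otimes B)^\op,\V]$, and that the inclusion \eqref{eq:32} factors through it; then I would try to factor the reflection \eqref{eq:1} accordingly and argue that each factor is left exact.

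Concretely, the key step is to identify $\Lex[A^\op,B^\op;\V]$ with $\Lex[A^\op,\Lex[B^\op,\V]]$ --- the functors $A^\op\to\Lex[B^\op,\V]$ that are left exact --- using the closed structure and the hypothesis that $\Lex[B^\op,\V]$ is abelian, hence in particular finitely complete and cocomplete, so one can form $\Lex[A^\op,-]$ into it. Write $\mathscr B_0=\Lex[B^\op,\V]$, abelian by hypothesis, and note $[A^\op,\mathscr B_0]\simeq[(A\otimes B)^\op,\V]$ and $\Lex[A^\op,\mathscr B_0]\simeq\Lex[A^\op,B^\op;\V]$. Now $\mathscr B_0$ is a locally finitely presentable abelian category, so I want a version of Lemma~\ref{l:1} with $\V$ replaced by $\mathscr B_0$: the reflection $[A^\op,\mathscr B_0]\to\Lex[A^\op,\mathscr B_0]$ is left exact iff $\Lex[A^\op,\mathscr B_0]$ is abelian. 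The proof of Lemma~\ref{l:1} should go through verbatim with $\mathscr B_0$ in place of $\V$, since all it used about $\V$ was that it is a locally finitely presentable abelian category in which filtered colimits are exact (for the reduction to arrows between finitely presentable objects via the last paragraph of Section~\ref{sec:locally-finit-pres}) together with Lemma~\ref{l:8} and Freyd's Lemma~\ref{l:9}, the latter being purely diagrammatic in any abelian category. The one subtlety is that Lemma~\ref{l:8} was stated for presheaf categories $[X^\op,\V]$; I would need its analogue for $[A^\op,\mathscr B_0]$, equivalently note that $\fin(A\otimes B)=\fin(A)\boxtimes\fin(B)$ by Remark~\ref{rmk:1} and that finitely presentable objects of $[A^\op,\mathscr B_0]=[(A\otimes B)^\op,\V]$ are still finite colimits of representables, so coequalisers of pairs between finite coproducts of representables --- this is exactly Lemma~\ref{l:8} for $X=A\otimes B$.

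Granting that generalised form of Lemma~\ref{l:1}, the corollary follows: $\Lex[A^\op,\mathscr B_0]$ being abelian is equivalent to the reflection being left exact, so it suffices to prove $\Lex[A^\op,\mathscr B_0]=\Lex[A^\op,B^\op;\V]$ is abelian. But here I would instead argue the other direction is cleaner: rather than assume abelianness and deduce left-exactness, I would run the proof of Lemma~\ref{l:1} so as to produce both conclusions simultaneously, namely show directly that the reflection $[A^\op,\mathscr B_0]\to\Lex[A^\op,\mathscr B_0]$ preserves kernels of maps between finitely presentable objects, hence (by Section~\ref{sec:locally-finit-pres}) is left exact, whence $\Lex[A^\op,B^\op;\V]$ is a left-exact-localisation of a locally finitely presentable abelian category and therefore abelian. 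To make the inductive structure honest, I would phrase an auxiliary statement: if $\mathscr E$ is a locally finitely presentable abelian category and $A$ is small finitely cocomplete, then $\Lex[A^\op,\mathscr E]$ is abelian and the reflection is left exact; this is proved by the argument of Lemma~\ref{l:1} with $\mathscr E$ in place of $\Lex[C^\op,\V]$-style hypotheses, and the corollary is the case $\mathscr E=\Lex[B^\op,\V]$, with the final sentence (``in particular if $A,B$ are abelian'') following from the classical fact, cited before Lemma~\ref{l:9}, that \eqref{eq:3} is left exact when $A,B$ are abelian, so $\Lex[A^\op,\V]$ and $\Lex[B^\op,\V]$ are abelian.

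The main obstacle I anticipate is the bookkeeping in the generalised Lemma~\ref{l:8}/Lemma~\ref{l:1}: verifying that ``finitely presentable object of $[(A\otimes B)^\op,\V]$'' coincides with ``finite colimit of representables on $A\otimes B$'' and that Freyd's diagram chase in Figure~\ref{fig:1} still applies when the ambient abelian category is $[A^\op,\mathscr B_0]$ rather than an honest module category --- it does, since Lemma~\ref{l:9} holds in any abelian category and $[A^\op,\mathscr B_0]$ is abelian --- and then tracking that the reflection $L$ preserves exactly the two cokernels and the two kernels in Figure~\ref{fig:1}. A secondary point to be careful about is that the representables on $A\otimes B$, which are of the form $A(-,a)\otimes B(-,b)$, are projective in $[(A\otimes B)^\op,\V]$ when the hom-objects are projective $\kk$-modules; in general one only knows the representables form a dense (strong) generator, but the argument of Lemma~\ref{l:1} only needs that every finitely presentable presheaf is a cokernel of a map between finite coproducts of representables and that such finite coproducts of representables lie in $\Lex[A^\op,B^\op;\V]$ --- the latter holds because $\Lex[A^\op,B^\op;\V]$ contains the representables exactly under the flatness/colimit-preservation conditions of Remark~\ref{rmk:6}, so I would either impose no extra hypothesis and work with $RY$-images as in Lemma~\ref{l:11} (which are always in the subcategory and are the images of representables under a left exact reflection, hence behave like ``projectives'' for the purpose of lifting the diagram \eqref{eq:8}), or simply note the kernels computed in \eqref{eq:8} only involve the representables through kernels of maps between them, which land in $\Lex$ automatically.
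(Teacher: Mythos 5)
Your reduction of the statement to the study of $\Lex[A^\op,\mathscr B_0]$ with $\mathscr B_0=\Lex[B^\op,\V]$ is exactly the paper's first move, but the auxiliary statement you then propose --- for \emph{any} small finitely cocomplete $A$ and any locally finitely presentable abelian $\mathscr E$, the category $\Lex[A^\op,\mathscr E]$ is abelian and the reflection is left exact --- is false, and this is where the argument breaks. Taking $\mathscr E=\V$ (equivalently $B=\V_f$, for which $\Lex[B^\op,\V]\simeq\V$ is abelian and $\Lex[A^\op,B^\op;\V]\simeq\Lex[A^\op,\V]$), your auxiliary statement would assert that $\Lex[A^\op,\V]$ is abelian for \emph{every} finitely cocomplete $A$, contradicting Remark \ref{rmk:5} and rendering both the hypothesis of the corollary and the ``if and only if'' of Lemma \ref{l:1} vacuous. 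Symptomatically, your argument never uses the assumption that $\Lex[A^\op,\V]$ is abelian. The plan to ``run the proof of Lemma \ref{l:1} so as to produce both conclusions simultaneously'' cannot succeed: the step of that proof which identifies $\ker(L\varphi)$ with $\operatorname{coker}(Lk'\to Lk)$ is precisely the one that invokes abelianness of the target category, so the circularity you are trying to break is genuine and must be broken by an input about $A$, not by a diagram chase valid for all $A$.

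The paper supplies that input by a variable swap rather than by generalising Lemma \ref{l:1}. It factors the inclusion \eqref{eq:32} as $\Lex[A^\op,L(B)]\hookrightarrow[A^\op,L(B)]\xrightarrow{[A^\op,i_B]}[A^\op,[B^\op,\V]]$. The second factor has left adjoint $[A^\op,L_B]$, left exact because $L_B$ is left exact (Lemma \ref{l:1} applied to $B$, limits being pointwise). The first factor is rewritten, via the isomorphisms $\Lex[A^\op,L(B)]\cong\Lex[B^\op,L(A)]$ and $[A^\op,L(B)]\cong\Lex[B^\op,[A^\op,\V]]$, as $\Lex[B^\op,i_A]$, whose left adjoint $\Lex[B^\op,L_A]$ exists and is left exact because $L_A$ is left exact --- this is exactly where the hypothesis that $\Lex[A^\op,\V]$ is abelian enters. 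If you want to retain a one-variable-at-a-time formulation, the correct auxiliary statement is: if the reflection $L_A:[A^\op,\V]\to\Lex[A^\op,\V]$ is left exact and $\mathscr E$ is a left exact localisation of a presheaf category, then the reflection onto $\Lex[A^\op,\mathscr E]$ is left exact; but proving this amounts to the same swap. Your secondary worries about Lemma \ref{l:8} and the projectivity of representables on $A\otimes B$ then become moot, since no diagram chase in $[(A\otimes B)^\op,\V]$ is needed at all.
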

\begin{proof}
  To save space, let us write
  ${L}(A)$, ${L}(B)$, ${L}(A,B)$ for
  $\Lex[A^\op,V]$, $\Lex[B^\op,\V]$, $\Lex[A^\op,B^\op;\V]$, and
  $i_A,i_B,i_{A,B}$ for the inclusion functors into the respective
  presheaf categories. The inclusion
  \begin{equation*}
    i_{A,B}:L(A,B)\hookrightarrow[(A\otimes  B)^\op,\V]
  \end{equation*}
  is, up to composing with the obvious isomorphisms
  $L(A,B)\cong\Lex[A^\op,L(B)]$, $[(A\otimes
  B)^\op,\V]\cong[A^\op,[B^\op,\V]]$, the following composition.
  \begin{equation}
    \label{eq:33}
    \Lex[A^\op,L(B)]\hookrightarrow[A^\op,L(B)]
    \xrightarrow{[A^\op,i_B]} [A^\op,[B^\op,\V]]
  \end{equation}
  By Lemma \ref{l:1}, the last arrow in \eqref{eq:33} has a left exact
  left adjoint,
  so it is enough to prove that the first arrow
  also has a left exact left adjoint. This inclusion functor is, up to
  composing with the isomorphisms $\Lex[A^\op,L(B)]\cong
  \Lex[B^\op,L(A)]$ and $[A^\op,L(B)]\cong\Lex[B^\op,[A^\op,\V]]$,
  \begin{equation*}
    \Lex[B^\op,L(A)]\xrightarrow{\Lex[B^\op,i_A]}\Lex[B^\op,[A^\op,\V]] 
  \end{equation*}
  which has a left exact left adjoint given by $\Lex[B^\op,L]$ where
  $L:[A^\op,\V]\to L(A)$ is the left exact left adjoint of $i_A$. 
\end{proof}

\begin{rmk}
  \label{rmk:5}
  % If $A,B$ are finitely cocomplete but not abelian, the category
  % $\operatorname{Lex}[A^\op,B^\op;\V]$ is not necessarily 
  % abelian. Moreover, the category $L=\operatorname{Lex}[A^\op,\V]$, which
  % is a particular case of the former when $B=\V_f$, is not
  % always abelian. In fact, $L$ is not abelian when $A$ is
  % finitely complete 
  % and finitely cocomplete but not abelian.
  % To see this, observe that for a finitely complete
  % and finitely cocomplete category $A$, the fully faithful $A\to L=
  % \operatorname{Lex}[A^\op,\V]$ is both left and right exact, forcing
  % $A$ to be abelian whenever $L$ is so.
  If $C$ is finitely cocomplete and finitely complete,
  $L=\Lex[C^\op,V]$ is not necessarily abelian. In fact, for such a
  category $C$, if $L$ is  abelian the exact embedding $C\to L$ forces
  $C$ to be abelian. 
\end{rmk}

During the preparation of this manuscript \cite{1211.3678} appeared,
where Lemma \ref{l:1} is proven using different techniques
---Grothendieck topologies--- and Corollary \ref{cor:4} shown only for
\kk-linear categories over a {\em field}\/ \kk. 

\section{On the existence of Deligne's tensor product}
\label{sec:exist-delign-tens}
Now we can proceed to compare Deligne's tensor product of abelian
categories with the tensor product of finitely cocomplete categories
of previous sections. By doing so we are
able to provide an example of two abelian categories whose Deligne
tensor product does not exist. 

Given two abelian categories $A,B$, their Deligne's tensor product as
defined in \cite{Deligne:TannakianCats}
is an {\em abelian} category $A\delten B$ together with a \kk-linear
functor $A\otimes B\to A\delten B$, right 
exact in each variable, that induces equivalences $\Rex[A\delten
B,C]\simeq\Rex[A,B;C]$ for any {\em abelian\/} $C$.% with the
% property that for any $F$ as in the diagram below, right exact in each
% variable and with $C$ {\em abelian\/}, there exists a right exact
% $A\delten B\to C$ and an isomorphism
% filling a triangle \eqref{eq:6}. 
% Moreover, pre-composition with $A\otimes B\to A\delten B$ must be a
% fully faithful functor, {\em i.e.}, must induce a bijection on natural
% transformations (this implies that the dashed arrow functor is unique
% up to an unique isomorphism). 
% \begin{equation}
%   \label{eq:6}
%   \diagramcompileto{deltensorproperty}
%   A\otimes B\ar[r]\ar[dr]_F &
%   A\delten B\ar@{-->}[d] \\
%   {}\ar@{}[ur]|-(.8){\cong}& C 
%   \enddiagram
% \end{equation}

\begin{lemma}
  \label{l:10}
  Suppose Deligne's tensor product of two abelian categories $A,B$
  exists. Then the functor
  \begin{equation}
    \label{eq:21}
    A\otimes B\longrightarrow A\delten{}B\longrightarrow{} \Ind(A\delten{}B)
  \end{equation}
  has the following universal property. For any
  cocomplete abelian category $C$, composition with \eqref{eq:21}
  induces an equivalence between the category of cocontinuous functors
  $\Ind(A\delten{}B)\to C$ and the category of functors $A\otimes
  B\to C$ right exact in each variable.
  \begin{equation}
    \label{eq:22}
    \mathrm{Cocts}[\Ind(A\delten{}B),C]\simeq \mathrm{Rex}[A,B;C]
  \end{equation}
\end{lemma}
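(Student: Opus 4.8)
The plan is to factor the universal property through two equivalences that have already been recorded. The first factorisation uses Theorem~\ref{thm:10}\ref{item:17}: since $\Ind(A\delten B)$ is the free cocompletion under filtered colimits of the finitely cocomplete category $A\delten B$, composition with the universal functor $A\delten B\to\Ind(A\delten B)$ induces an equivalence
\begin{equation*}
  \mathrm{Cocts}[\Ind(A\delten B),C]\simeq\mathrm{Rex}[A\delten B,C]
\end{equation*}
for every cocomplete category $C$; in particular for every cocomplete abelian $C$. The second factorisation uses the defining universal property of Deligne's tensor product, namely the equivalence $\Rex[A\delten B,C]\simeq\Rex[A,B;C]$ of Definition~\ref{df:1}. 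The catch is that this equivalence is stated in \cite{Deligne:TannakianCats} only for \emph{abelian} $C$, whereas the target of the previous step is an arbitrary cocomplete abelian category, which is fine since a cocomplete abelian category is in particular abelian. Splicing the two equivalences together and checking that the composite is (isomorphic to) restriction along the composite functor \eqref{eq:21} gives \eqref{eq:22}.

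So concretely I would proceed as follows. First fix a cocomplete abelian category $C$. Second, observe that $A\delten B$ is by hypothesis an abelian category, hence in particular finitely cocomplete, so Theorem~\ref{thm:10} applies to $X=A\delten B$ and yields the equivalence $\mathrm{Cocts}[\Ind(A\delten B),C]\xrightarrow{\simeq}\mathrm{Rex}[A\delten B,C]$ given by restriction along $A\delten B\to\Ind(A\delten B)$. Third, invoke Definition~\ref{df:1}: since $C$ is abelian, restriction along $A\otimes B\to A\delten B$ gives an equivalence $\Rex[A\delten B,C]\xrightarrow{\simeq}\Rex[A,B;C]$. Fourth, compose the two equivalences; since restriction functors compose, the composite equivalence is restriction along the composite $A\otimes B\to A\delten B\to\Ind(A\delten B)$, which is exactly \eqref{eq:21}. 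This yields \eqref{eq:22} and finishes the proof.

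The only point needing a little care---and the main obstacle, such as it is---is making sure the hypotheses of Theorem~\ref{thm:10}\ref{item:17} are genuinely met: that result requires the \emph{domain} category ($A\delten B$ here) to have \emph{finite colimits}, which holds because an abelian category has all finite colimits, and it requires the \emph{target} ($C$) to be merely cocomplete, which is part of our hypothesis. One should also note that the functor $A\delten B\to\Ind(A\delten B)$ used here is the universal one of Theorem~\ref{thm:10}, and that by part~\ref{item:15} of that theorem it preserves finite colimits, so it is itself right exact; this is what guarantees that the composite \eqref{eq:21} is right exact in each variable (being the composite of the right-exact-in-each-variable functor $A\otimes B\to A\delten B$ with a right exact functor), so that restriction along it does land in $\mathrm{Rex}[A,B;C]$ and the statement of the lemma is well-posed. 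With these observations in place the proof is just the concatenation of two named equivalences.
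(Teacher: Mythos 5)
Your proof is correct and is essentially identical to the paper's: both factor the claimed equivalence as the composite of the equivalence $\mathrm{Cocts}[\Ind(A\delten B),C]\simeq\Rex[A\delten B,C]$ from Theorem~\ref{thm:10}~\ref{item:17} with the defining equivalence $\Rex[A\delten B,C]\simeq\Rex[A,B;C]$ of Deligne's tensor product for the abelian category $C$. Your additional remarks on well-posedness (right exactness of the composite \eqref{eq:21}) are a welcome bit of extra care, not a deviation.
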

\begin{proof}
  The functor \eqref{eq:22} is the composition of the following two
  equivalences. 
  $$
  \mathrm{Cocts}[\Ind(A\delten{}B),C]\longrightarrow
  \mathrm{Rex}[A\delten{}B,C]\longrightarrow
  \mathrm{Rex}[A,B;C]
  $$
  The first equivalence is the one provided by Theorem
  \ref{thm:10}~\ref{item:17}. 
\end{proof}

\begin{thm}
  \label{thm:3}
  Given two abelian categories $A,B$ the following are equivalent.
  \begin{enumerate}
  \item \label{item:9}
    Deligne's tensor product $A\delten B$ of $A,B$ exists.
  \item \label{item:10}
    The tensor product $A\boxtimes B$ of $A,B$ as categories with
    finite colimits is abelian.
  \end{enumerate}
\end{thm}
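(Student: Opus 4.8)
The plan is to prove the two implications separately, the crux being the identification $\Ind(A\delten B)\simeq\Lex[A^\op,B^\op;\V]$ whenever $A\delten B$ exists.

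To see that \ref{item:10} implies \ref{item:9}, suppose $A\boxtimes B$ is abelian. The universal functor $\chi:A\otimes B\to A\boxtimes B$ of Theorem \ref{thm:9} is right exact in each variable, and for every \emph{finitely cocomplete} category $C$ composition with $\chi$ induces an equivalence $\Rex[A\boxtimes B,C]\simeq\Rex[A,B;C]$. Every abelian category is in particular finitely cocomplete, so this equivalence holds for all abelian $C$; hence $(A\boxtimes B,\chi)$ satisfies the requirements of Definition \ref{df:1}, and $A\delten B$ exists and may be taken to be $A\boxtimes B$.

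For the converse, assume $A\delten B$ exists and set $\mathcal E=\Lex[A^\op,B^\op;\V]$. By Corollary \ref{cor:4}, $\mathcal E$ is a cocomplete abelian category; by Remark \ref{rmk:4} it is locally finitely presentable, $\mathcal E\simeq\Ind(A\boxtimes B)$, and $\mathcal E_f=A\boxtimes B$. Since $A\delten B$ is abelian, $\Ind(A\delten B)=\Lex[(A\delten B)^\op,\V]$ (Theorem \ref{thm:10}\ref{item:13}) is a left exact reflective subcategory of a presheaf category \cite[Ch.~2]{MR0232821} (see also Lemma \ref{l:1}), hence likewise cocomplete and abelian, and by Theorem \ref{thm:10}\ref{item:16} its finitely presentable objects form a copy of $A\delten B$. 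I would then build mutually inverse cocontinuous equivalences between these two cocomplete abelian categories. The composite $A\otimes B\to A\boxtimes B\hookrightarrow\mathcal E$ is right exact in each variable (by the construction of $\chi$ and Theorem \ref{thm:10}\ref{item:15}), so feeding it into the universal property of Lemma \ref{l:10} --- legitimate because $\mathcal E$ is cocomplete and abelian --- produces a cocontinuous functor $G:\Ind(A\delten B)\to\mathcal E$. Symmetrically, the composite $A\otimes B\to A\delten B\to\Ind(A\delten B)$ is right exact in each variable, so the universal property of $\mathcal E\simeq\Ind(A\boxtimes B)$ given by Theorem \ref{thm:10}\ref{item:17} together with Theorem \ref{thm:9} produces a cocontinuous $F:\mathcal E\to\Ind(A\delten B)$. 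By construction $GF$ and $FG$ restrict, along the canonical functors from $A\otimes B$, to functors isomorphic to the respective universal right-exact-in-each-variable functors; since the comparison functors in those universal properties are equivalences, this forces $GF\cong\mathrm{id}$ and $FG\cong\mathrm{id}$. Restricting the equivalence $F$ to finitely presentable objects then gives $A\boxtimes B=\mathcal E_f\simeq(\Ind(A\delten B))_f\simeq A\delten B$, and since $A\delten B$ is abelian, so is $A\boxtimes B$; the same restriction shows moreover that the two tensor products coincide, which is the extra content of Theorem \ref{thm:5}.

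The step I expect to be the main obstacle is exactly this comparison $\Ind(A\delten B)\simeq\mathcal E$. One must check that each of the two categories is an admissible test object in the other's universal property --- this is precisely why one needs them to be cocomplete abelian, not merely finitely cocomplete --- and then run the (standard but slightly fiddly) bicategorical uniqueness-of-representing-object argument to upgrade the resulting pseudonatural equivalence of functor categories to a genuine equivalence compatible with the canonical functors out of $A\otimes B$. Everything else is bookkeeping with the cited results on $\Ind$-completions and on the left exactness of the lex reflection.
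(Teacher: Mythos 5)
Your proposal is correct and follows essentially the same route as the paper: both directions hinge on identifying $\Ind(A\delten B)$ with $\Lex[A^\op,B^\op;\V]$ via their shared universal property (using Corollary \ref{cor:4} to know the latter is abelian, and Lemma \ref{l:10} for the former), and then passing to finitely presentable objects to compare $A\delten B$ with $A\boxtimes B$. The paper merely states that the reflected Yoneda functor into $\Lex[A^\op,B^\op;\V]$ has the universal property of Lemma \ref{l:10} and invokes uniqueness, whereas you spell out the mutually inverse cocontinuous functors explicitly; the mathematical content is the same.
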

\begin{proof}
  Recall from Remark \ref{rmk:4} that $A\boxtimes B$ can be constructed
  as $L_f$, the full 
  subcategory of $L=\Lex[A^\op,B^\op;\V]$ of finitely presentable objects. Moreover $L$ is
  locally finitely presentable, so $\Ind{(L_f)}\simeq L$ by Theorem
  \ref{thm:10}. 
  Assume that $A\delten{}B$ exists. Its completion under filtered colimits
  $\Ind(A\delten{}B)$ is locally finitely
  presentable with $(\Ind(A\delten{}B))_f\simeq A\delten{}B$, and in
  addition it is an abelian category as any completion under
  filtered colimits of an abelian category is. It is
  enough, then, to prove that $L$ and $\Ind(A\delten{}B)$ are
  equivalent. To this end we show that the composition of the Yoneda
  embedding with the reflection into $L$
  \begin{equation}
    \label{eq:24}
    A\otimes B\longrightarrow{}[(A\otimes B)^\op,\V] \longrightarrow{} L
  \end{equation}
  has the universal
  property of $\Ind(A\delten{}B)$ established in Lemma
  \ref{l:10}. This functor always induces equivalences
  \begin{equation*}
    \mathrm{Cocts}[L,C]\simeq\mathrm{Rex}[A\boxtimes B,C]\simeq
    \mathrm{Rex}[A,B;C]
  \end{equation*}
  for all cocomplete categories $C$; the first equivalence is given by
  consequence of the equivalence $\Ind(A\boxtimes B)\simeq L$ and 
  Theorem \ref{thm:10}~\ref{item:17}.
  Together with the fact that $L$ is abelian (Corollary \ref{cor:4}),
  this shows that \eqref{eq:24} has the
  universal property of Lemma \ref{l:10}.

  Conversely, if $A\boxtimes B$ abelian, it clearly has the universal
  property of $A\delten B$. 
\end{proof}

\subsection{A counterexample to existence}
\label{sec:count-exit}

The question arises of whether there exists a pair of abelian
categories whose Deligne tensor product does not exist, or
equivalently, whose tensor product $\boxtimes$ is not abelian. We show:

\begin{cor}\label{cor:5}
  Given an arbitrary field \kk,
  there exists a pair of abelian \kk-linear categories whose
  Deligne's tensor product does not exist. 
\end{cor}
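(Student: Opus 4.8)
The plan is to reduce Corollary~\ref{cor:5} to Theorem~\ref{thm:3}: it suffices to exhibit a single pair of abelian \kk-linear categories $A,B$ for which $A\boxtimes B$ fails to be abelian. Since $A\boxtimes B$ can be realised as the full subcategory $L_f$ of finitely presentable objects in $L=\Lex[A^\op,B^\op;\V]$, and $\Ind(A\boxtimes B)\simeq L$ is always locally finitely presentable, the obstruction to abelianness must be detected at the level of $L$ or, better, at the level of the reflection $[(A\otimes B)^\op,\V]\to L$. By Corollary~\ref{cor:4} that reflection \emph{is} left exact when $A,B$ are abelian, so $L$ itself is always abelian here; the failure is instead that $L_f$ need not be closed under kernels inside $L$, i.e.\ the kernel in $L$ of a map between finitely presentable objects of $L$ need not be finitely presentable. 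So the real task is to pin down a situation where finite presentability is not preserved by kernels.

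The natural source of such a phenomenon is commutative ring theory, and here is where Soublin's work \cite{MR0260799} enters. Recall from the discussion in the introduction and from Example~\ref{ex:1} that for \kk-algebras $\mathsf A,\mathsf B$ one has $\mathsf A\text-\mathrm{Mod}_f\boxtimes\mathsf B\text-\mathrm{Mod}_f\simeq(\mathsf A\otimes_\kk\mathsf B)\text-\mathrm{Mod}_f$, and that $R\text-\mathrm{Mod}_f$ is abelian precisely when $R$ is a coherent ring (this is the $C$-with-a-strong-projective-generator instance of Lemma~\ref{l:1} together with the standard fact that $\Lex[(\Sigma R)^\op,\V]\simeq R\text-\mathrm{Mod}$ is abelian iff $R$ is coherent). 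The plan is therefore: (i) take $\kk$ a field and choose commutative \kk-algebras $\mathsf A$ and $\mathsf B$ that are each coherent but whose tensor product $\mathsf A\otimes_\kk\mathsf B$ is \emph{not} coherent; (ii) set $A=\mathsf A\text-\mathrm{Mod}_f$ and $B=\mathsf B\text-\mathrm{Mod}_f$, which are abelian \kk-linear categories since $\mathsf A,\mathsf B$ are coherent; (iii) conclude $A\boxtimes B\simeq(\mathsf A\otimes_\kk\mathsf B)\text-\mathrm{Mod}_f$ is not abelian, and invoke Theorem~\ref{thm:3} to conclude $A\delten B$ does not exist.

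For step~(i) I would take $\mathsf A=\mathsf B$ to be a polynomial ring in infinitely many variables over $\kk$, or more precisely a ring of the kind Soublin constructs: Soublin showed that coherence is not preserved under tensor products (equivalently, that power series rings and certain infinite polynomial constructions over a field can be coherent while a tensor square fails to be), producing exactly a coherent \kk-algebra $\mathsf A$ with $\mathsf A\otimes_\kk\mathsf A$ not coherent. One cites \cite{MR0260799} for the existence of such an $\mathsf A$; no new ring-theoretic work is needed beyond quoting that result and checking that $\kk\to\mathsf A$ makes $\mathsf A\text-\mathrm{Mod}_f$ a genuinely \kk-linear category in our sense.

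The main obstacle is not the category theory — that is essentially bookkeeping with Example~\ref{ex:1} and Theorem~\ref{thm:3} — but making sure the ring-theoretic input is quoted correctly: one must confirm that Soublin's example really does produce a \emph{commutative} coherent \kk-algebra over an \emph{arbitrary} field whose tensor square over \kk\ is non-coherent, rather than a statement about a fixed ground ring or about one-sided coherence. Once that citation is secured, the rest is immediate. I would also remark in passing (for the reader's orientation) that this simultaneously shows the tensor product $\boxtimes$ of finitely cocomplete categories does \emph{not} restrict to a tensor product on abelian categories, which is the conceptual point being illustrated.
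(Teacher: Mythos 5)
Your categorical reduction is exactly the one the paper uses: invoke Theorem~\ref{thm:3}, identify $\mathsf R\text-\mathrm{Mod}_f\boxtimes\mathsf S\text-\mathrm{Mod}_f$ with $(\mathsf R\otimes_\kk\mathsf S)\text-\mathrm{Mod}_f$ via Example~\ref{ex:1}, and use the fact that $\mathsf R\text-\mathrm{Mod}_f$ is abelian iff $\mathsf R$ is coherent, so that everything reduces to producing two coherent $\kk$-algebras whose tensor product over $\kk$ is not coherent. That part is fine, and you correctly identify the ring-theoretic input as the only real content.

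The gap is precisely in that input, and it is not merely a citation to be double-checked: the concrete candidate you offer does not work, and your paraphrase of Soublin is not what he proves. A polynomial ring in infinitely many variables over a field \emph{is} coherent (it is a directed union of Noetherian subrings along flat inclusions), and its tensor square over $\kk$ is again a polynomial ring in infinitely many variables, hence coherent --- so that example cannot exhibit the failure. Soublin's result \cite{MR0260799} is also not of the form ``there is a coherent $\mathsf A$ with $\mathsf A\otimes_\kk\mathsf A$ not coherent''; what the paper extracts from \cite[Prop.~19]{MR0260799} is an asymmetric pair: take $\mathsf A\subset\kk^{\mathbb N}$ the subalgebra of \emph{stationary sequences}, which is von Neumann regular and hence coherent, and pair it with $\kk[[x]]$, which is Noetherian and hence coherent; Soublin shows (for $\kk=\mathbb Q$, by an argument valid over any field) that $\mathsf A[[x]]$, i.e.\ the tensor product $\mathsf A\otimes_\kk\kk[[x]]$ in the paper's identification, is \emph{not} coherent. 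Without supplying such a pair your proof establishes only the conditional statement ``if two coherent $\kk$-algebras with non-coherent tensor product exist, then Deligne's tensor product can fail to exist,'' which is the easy half; the corollary's content lives entirely in the example you left unspecified.
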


  Recall that given 
  $\kk$-algebras $\mathsf R,\mathsf S$
  \begin{equation*}
    \mathsf R\text-\mathrm{Mod}_f\boxtimes \mathsf S\text-\mathrm{Mod}_f
  \simeq (\mathsf R\otimes_{\kk} \mathsf S)\text-\mathrm{Mod}_f
  \end{equation*}
  as categories enriched in $\kk$-vector spaces (see Example
  \ref{ex:1}). 
  
  The category of finitely presented
  $\mathsf R$-modules $\mathsf R\text-\mathrm{Mod}_f$ is abelian
  precisely when $\mathsf R$ is
  a {\em left coherent}\/ ring. Another characterisation of coherence
  says that a ring $\mathsf R$ is left 
  coherent when every finitely generated left ideal is
  finitely presentable. For example, Noetherian rings are coherent. 
  The notion of a coherent ring was introduced in \cite{MR0120260} and
  \cite{MR0217051}. % , and further studied in \cite{MR0193111,MR0223405},
  % \cite{MR0232798,MR0232799,MR0232801,MR0232800,MR0260799}. 
  
% \begin{ex}\label{ex:5}
%   The following example is given by Soublin in
%   \cite{MR0232800,MR0260799}. 
%   Consider the $\mathbb Q$-algebra $\mathbb Q^\mathbb N$, the
%   numerable product of copies of the field of rational numbers, and
%   $\mathsf S=\mathbb Q^{\mathbb{N}}[[t,u]]$ its algebra of formal series in
%   two variables. This
%   is a coherent ring whose ring of polynomials $\mathsf S[x]$ is {\em not}\/
%   coherent. Thus we have two coherent $\mathbb Q$-algebras $\mathsf S$ and
%   $\mathbb Q[x]$ whose tensor product over the rational numbers
%   $\mathsf S\otimes_{\mathbb{Q}} \mathbb Q[x]\cong \mathsf S[x]$ is not coherent. 
% \end{ex}

The following example is given in \cite[Prop. 19]{MR0260799}, and
together with Theorem \ref{thm:3} yields Corollary \ref{cor:5}. 

\begin{ex}
  \label{ex:2}
  Let \kk\ be a field and $\mathsf
  A\subset \kk^{\mathbb N}$ be the sub-\kk-algebra of stationary
  sequences. It is easy to check that $\mathsf A$ is von Neumann regular, and thus
  coherent, but the \kk-algebra of formal series $\mathsf A[[x]]$ is not
  coherent. This is shown in \cite[Prop. 19]{MR0260799}, for
  $\kk=\mathbb Q$, but the argument carries over an arbitrary
  field. Therefore we have two coherent \kk-algebras $\mathsf
  A,\kk[[x]]$ whose tensor product over \kk\ is not coherent. 
\end{ex}

\subsection{An existence result}
\label{sec:an-existence-result}

We finish the section with a positive result on the existence of Deligne's tensor
product of two abelian categories, namely a completion of the
proof of \cite[Prop. 5.13 (i)]{Deligne:TannakianCats}. 

Following the terminology of \cite{Lyubashenko:SqrHopf}, we say that
an abelian category $A$ over a field has {\em length}\/ if 
\begin{enumerate}
\item $A$ has finite dimensional homs.
\item All the objects of $A$ have finite length.
\end{enumerate}

\begin{prop}[\cite{Deligne:TannakianCats}]
  \label{prop:4}
  Let \kk\ be a field and $A,B$ abelian \kk-linear categories with
  length. Then their Deligne tensor product $A\delten{}B$ exists. 
\end{prop}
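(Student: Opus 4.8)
By Theorem~\ref{thm:3}, it suffices to show that $A\boxtimes B$ is an abelian category, or equivalently (by Corollary~\ref{cor:4} and Lemma~\ref{l:1}) that the reflection $L\colon[(A\otimes B)^\op,\V]\to\Lex[A^\op,B^\op;\V]$ is left exact. Since $A,B$ have length over the field $\kk$, Corollary~\ref{cor:4} already tells us that $\Lex[A^\op,B^\op;\V]$ is abelian; what is missing in \cite{Deligne:TannakianCats} is the passage back down to the finitely presentable objects, i.e.\ the verification that $A\boxtimes B\simeq\Lex[A^\op,B^\op;\V]_f$ is closed under kernels inside $\Lex[A^\op,B^\op;\V]$. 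The plan is therefore to exhibit $A\boxtimes B$ concretely and check it is abelian by a finiteness/semisimplicity-flavoured argument exploiting the length hypothesis.

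First I would reduce to the case where $A$ and $B$ are replaced by convenient generators. Because $A$ has length, pick a strong generator $G_A\subset A$ (e.g.\ a set of representatives of the simple objects, or their covers if they exist; over a field with finite-dimensional homs one may instead work with $\fin$-presentations). The universal functor $\chi\colon A\otimes B\to A\boxtimes B$ is dense with image the objects $\chi(a,b)$, and $A\boxtimes B$ is the closure of this image under finite colimits. The key point to establish is that every object of $A\boxtimes B$ is in fact \emph{already} a finite colimit of a particularly simple shape built from the $\chi(a,b)$ with $a,b$ simple --- concretely, I would aim to show that an arrow $\chi(a,b)\to\chi(a',b')$ has a kernel in $A\boxtimes B$ by resolving it. Using that $A(a,-)$ and $B(b,-)$ are finite-dimensional, the hom-spaces $(A\boxtimes B)(\chi(a,b),\chi(a',b'))\cong \Lex[A^\op,B^\op;\V](\ldots)$ can be computed and shown finite-dimensional; then one runs the Freyd diagram of Lemma~\ref{l:9}, applied in the abelian category $\Lex[A^\op,B^\op;\V]$, starting from a presentation of the given arrow by maps between objects $\chi(a,b)$ with $a,b$ ranging over a generator. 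The rows $k',k$ of Figure~\ref{fig:1} are kernels of maps between such objects, and the content is to see those kernels land back in $A\boxtimes B$.

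The heart of the matter, and the step I expect to be the main obstacle, is precisely this closure-under-kernels claim: a priori the kernel in $\Lex[A^\op,B^\op;\V]$ of a map between finitely presentable objects need not be finitely presentable (that is exactly the coherence phenomenon behind Corollary~\ref{cor:5}). The length hypothesis is what rescues us: I would argue that over a field, with finite-dimensional homs and finite length, the relevant presheaf categories $\Lex[A^\op,\V]$ and $\Lex[B^\op,\V]$ are \emph{locally finite} (every finitely presentable object has finite length), so that $\fin(A)=\Lex[A^\op,\V]_f$ is closed under subobjects, hence abelian with Noetherian-and-Artinian finitely presentable objects. One then wants the analogous statement in two variables: that $\Lex[A^\op,B^\op;\V]_f=A\boxtimes B$ consists of finite-length objects and is closed under subobjects. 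I would prove this by induction on length, using that $\chi(a,b)$ for $a,b$ simple are the "building blocks'' and that a subobject of a finite-length object, being itself finite length, is a finite iterated extension of simples $\chi(s,t)$ --- the crux being to show each such simple subquotient, and each extension of them, lies in $A\boxtimes B$, which follows from $A\boxtimes B$ being closed under finite colimits once one checks it contains all the $\chi(s,t)$ and the requisite extension data (pushouts/pullbacks of maps among them).

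Finally, having shown $A\boxtimes B$ is closed under kernels in the abelian category $\Lex[A^\op,B^\op;\V]$ --- it is already closed under finite colimits by construction, and it inherits $\kk$-linearity and the normality of monos and epis --- we conclude $A\boxtimes B$ is abelian, whence by Theorem~\ref{thm:3} the Deligne tensor product $A\delten B$ exists and coincides with $A\boxtimes B$. A remark worth including: the biadjunction/bicolimit machinery of Section~\ref{sec:bicolimits} (the fact that $\boxtimes$ preserves bicolimits in each variable, flagged in the statement as used here) can be used to reduce the general length case to the case of categories of finite-length modules over finite-dimensional algebras, i.e.\ to $A=\fin(\Sigma R)=R\text-\mathrm{Mod}_f$ with $R$ finite-dimensional; there $R$ is Noetherian hence coherent, $R\otimes_\kk S$ is again finite-dimensional hence coherent, and $A\boxtimes B\simeq(R\otimes_\kk S)\text-\mathrm{Mod}_f$ is abelian by Example~\ref{ex:1}. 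Writing $A$ as a suitable (bi)colimit of such module categories along length-preserving exact functors, and using exactness of filtered (bi)colimits of abelian categories, then yields the general statement --- this reduction is, I suspect, the cleanest route and is likely the one Deligne intended, with the gap being the justification that the relevant colimit presentation exists and is preserved by $\boxtimes$.
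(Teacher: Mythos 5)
Your closing ``remark'' is in fact the paper's proof, and it is the route you should have taken as the main argument: Lemma~\ref{l:7} writes $A$ and $B$ as filtered unions of full subcategories closed under subobjects and quotients, each equivalent to $\mathsf A_\alpha\text-\mathrm{Mod}_f$ for a finite dimensional algebra; since $\boxtimes$ has a right biadjoint in each variable it preserves filtered bicolimits, so $A\boxtimes B$ is the filtered bicolimit of the $(\mathsf A_\alpha\otimes\mathsf B_\beta)\text-\mathrm{Mod}_f$, each abelian; and Lemma~\ref{l:15} closes the argument. But even on this route you skip the one point that actually requires work: exactness of the transition functors $A_\alpha\boxtimes B_\beta\to A_{\alpha'}\boxtimes B_{\beta'}$. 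For exact $F,G$ between abelian categories, $F\boxtimes G$ is only right exact a priori. The paper needs Lemma~\ref{l:4} to identify the inclusions $A_\alpha\subset A_{\alpha'}$ as restriction-of-scalars functors along algebra maps, and then Lemma~\ref{l:2} to show that $f_1^*\boxtimes f_2^*\cong(f_1\otimes f_2)^*$ is again restriction of scalars, hence exact. Saying ``along length-preserving exact functors, and using exactness of filtered bicolimits'' papers over exactly this step.

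The direct approach occupying the bulk of your proposal has a genuine gap at the place you yourself flag as the heart of the matter. Your induction on length rests on the claim that a simple subquotient of an object of $A\boxtimes B$ is of the form $\chi(s,t)$ with $s,t$ simple. This is false over a general field: already for $A=B=\mathbb C\text-\mathrm{Mod}_f$ over $\kk=\mathbb R$ one has $A\boxtimes B\simeq(\mathbb C\otimes_{\mathbb R}\mathbb C)\text-\mathrm{Mod}_f\simeq(\mathbb C\times\mathbb C)\text-\mathrm{Mod}_f$, so $\chi(s,t)$ decomposes and the simples of $A\boxtimes B$ are proper summands of the $\chi(s,t)$, not equal to them. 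One would instead have to show that each $\chi(s,t)$ has finite length in $\Lex[A^\op,B^\op;\V]$ (via the finite dimensional algebra $\mathrm{End}(s)\otimes_\kk\mathrm{End}(t)$) and that $A\boxtimes B$ is closed under subobjects of finite-length objects --- and that closure claim is precisely the coherence-type statement that needs proof, not something that ``follows from $A\boxtimes B$ being closed under finite colimits.'' As written, the direct argument does not go through; the bicolimit reduction does, once the exactness of the transition functors is supplied.
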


% \begin{lemma}
%   \label{l:14}
%   If $(A_\alpha),(B_\beta)$ are two filtered diagrams of abelian
%   categories and exact functors and each tensor product
%   $A_\alpha\delten{}B_\beta$ exists, then so does $(\colim_\alpha
%   A_\alpha)\delten{}(\colim_\beta B_\beta)$.
% \end{lemma}

P. Deligne pointed out in a private communication to the author a
that the proof of  Proposition \ref{prop:4} (\cite[Prop
5.13]{Deligne:TannakianCats}) relies on an inaccurate statement at the
end of \cite[5.1]{Deligne:TannakianCats}.
% the following inaccurate statement in 
% \cite[5.1]{Deligne:TannakianCats}: if $A_\alpha,B_\beta$ are filtered
% diagrams of abelian categories and right exact functors then the
% tensor product
% $(\operatorname{bicolim}A_\alpha)\delten{}(\operatorname{bicolim}B_\beta)$
% of their respective bicolimits exists. 
The rest of this section is devoted to give a proof of this result.
We shall need the following lemmas.

\begin{lemma}[\cite{Deligne:TannakianCats}]
  \label{l:7}
  Any abelian category $A$ with length over a field is filtered union of
  full subcategories $A_\alpha\subset A$ closed under subobjects and quotients, each of which is equivalent
  to $\mathsf A_\alpha\text-\mathrm{Mod}_f$ for some finite
  dimensional algebra $\mathsf A$.
\end{lemma}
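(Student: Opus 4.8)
The plan is to unpack the definition of ``length'' and exhibit the required filtered system explicitly. First I would fix, for each finite family $S = \{s_1,\dots,s_n\}$ of objects of $A$, the full subcategory $A_S \subset A$ whose objects are those $a$ admitting a finite filtration with subquotients among the simple summands appearing in composition series of the $s_i$ --- equivalently, the Serre subcategory generated by $S$. Since every object of $A$ has finite length, $A$ is the union of the $A_S$ as $S$ ranges over finite families, and this union is filtered because $A_S \cup A_T \subseteq A_{S \cup T}$. By construction each $A_S$ is closed under subobjects and quotients (a subquotient of a finite-length object built from a given set of simples is again built from those simples), and it is closed under finite colimits in $A$, hence abelian. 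It remains to identify each $A_S$ with $\mathsf A_\alpha\text-\mathrm{Mod}_f$ for a finite dimensional \kk-algebra $\mathsf A_\alpha$.

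For the identification, the key point is that $A_S$ has, up to isomorphism, only finitely many simple objects, say $T_1,\dots,T_m$ (those occurring in the composition series of the $s_i$), and that every object of $A_S$ has finite length, so $A_S$ is a finite-length abelian category with finitely many simples. I would then set $P = \bigoplus_j P_j$ where $P_j$ is a projective cover of $T_j$ in $\Ind(A_S) \simeq \Lex[A_S^{\op},\V]$ (projective covers exist here by finite length together with the finite-dimensionality of homs, which makes endomorphism rings of simples finite dimensional division algebras and lets one lift idempotents / build covers in the standard way). Each $P_j$ is finitely presentable because it is finite length, so $P \in A_S$, and $P$ is a projective generator of $A_S$: it is projective by construction and it is a generator because any nonzero object of $A_S$ has a simple subquotient $T_j$, so $A_S(P,-)$ reflects zero, and on finite-length objects reflecting zero upgrades to detecting isomorphisms. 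Setting $\mathsf A_\alpha = \EndA_S(P)^{\op}$, the functor $A_S(P,-): A_S \to \mathsf A_\alpha\text-\mathrm{Mod}$ lands in finitely presentable modules and is an equivalence onto $\mathsf A_\alpha\text-\mathrm{Mod}_f$ by the classical Morita-type argument (fully faithful on the generator $P$, hence on its finite colimits, which exhaust $A_S$). Finally $\mathsf A_\alpha$ is finite dimensional over \kk\ because $P$ has finite length and all homs in $A_S$ are finite dimensional, so $\EndA_S(P)$ is a finite-dimensional \kk-vector space.

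The main obstacle is the construction of the projective objects $P_j$: $A_S$ itself need not have enough projectives as an abstract abelian category, so one genuinely has to pass to $\Ind(A_S)$, produce projective covers of the simples there, and then argue that these covers are already \emph{finitely presentable}, i.e. lie back in $A_S$. This is where finite length of objects and finite-dimensionality of homs are both used essentially: finite length bounds the ``size'' of a projective cover of a simple (it has a finite composition series, with multiplicities controlled by $\mathrm{Ext}$-dimensions between simples, all finite), and finite-dimensional homs guarantee those multiplicities are finite, so the cover is finite length and hence finitely presentable. Once the projective generator $P$ is in hand, the remaining steps --- that $A_S(P,-)$ is an equivalence onto $\mathsf A_\alpha\text-\mathrm{Mod}_f$ and that $\mathsf A_\alpha$ is finite dimensional --- are routine. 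I would also remark that, alternatively, one can cite the standard structure theory of finite-length \kk-linear abelian categories with finitely many simples directly, but spelling out the projective-generator argument keeps the proof self-contained.
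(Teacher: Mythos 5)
The paper itself gives no proof of this lemma --- it is quoted from Deligne, whose argument takes for $A_\alpha$ the subcategories $\langle X\rangle$ of subquotients of finite powers $X^{\oplus n}$ of a single object $X$. Your decomposition is genuinely different, and unfortunately it fails: the Serre subcategory generated by a finite family $S$ is too large to be of the form $\mathsf A_\alpha\text-\mathrm{Mod}_f$. Concretely, let $A$ be the category of finite dimensional $\kk[x]$-modules, which has length in the sense of the paper, and let $S=\{\kk[x]/(x)\}$. Your $A_S$ is then the category of all finite dimensional modules killed by some power of $x$; it contains the indecomposables $\kk[x]/(x^n)$ for every $n$, which have unbounded Loewy length, whereas in $\mathsf B\text-\mathrm{Mod}_f$ for a finite dimensional algebra $\mathsf B$ every object has Loewy length at most that of $\mathsf B$. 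So this $A_S$ is not equivalent to any $\mathsf A_\alpha\text-\mathrm{Mod}_f$. The same example kills the projective-cover step on which your identification rests: the simple $\kk$ has nontrivial self-extensions of every length inside $A_S$, its ``projective cover'' in the relevant completion is the pro-object $\varprojlim \kk[x]/(x^n)$, which is not of finite length and not finitely presentable, and indeed $A_S$ (and its Ind-completion, the torsion $\kk[x]$-modules) has no nonzero projectives at all. Your assertion that finite length of objects plus finite dimensional homs bounds the size of a projective cover of a simple is exactly what is false here: finitely many simples with finite dimensional $\mathrm{Ext}^1$ between them still allows iterated extensions of unbounded length.

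The fix is to shrink the pieces: for each object $X$ take $A_X=\langle X\rangle$, the full subcategory of subquotients of $X^{\oplus n}$, $n\ge 0$. This is closed under subobjects, quotients and finite direct sums (hence is an abelian subcategory whose kernels and cokernels agree with those of $A$), the union over $X$ is all of $A$ and is filtered via $\langle X\rangle,\langle Y\rangle\subseteq\langle X\oplus Y\rangle$; crucially it is \emph{not} closed under extensions, which is why the lemma only asks for closure under subobjects and quotients. The boundedness built into the definition (everything is a subquotient of a power of the fixed $X$) is precisely what forces the relevant endomorphism algebra to be finite dimensional and yields $\langle X\rangle\simeq\mathsf A_X\text-\mathrm{Mod}_f$; this is \cite[2.17]{Deligne:TannakianCats}. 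Once you replace your $A_S$ by $\langle X\rangle$, a Morita-type argument of the kind you sketch can be made to work, but as written the proposal proves a false intermediate claim.
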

 
\begin{lemma}\label{l:4}
  Let $\mathsf{A}$ be a finite dimensional algebra and $K:B\hookrightarrow
  \mathsf{A}\text-\mathrm{Mod}_f$ the 
  inclusion of a full subcategory closed under direct sums, subobjects
  and quotients. Then $B$ is equivalent to
  $\mathsf{B}\text-\mathrm{Mod}_f$ for some finite dimensional algebra
  $\mathsf{B}$ and $K$ is, up to composing with this equivalence, the
  restriction of scalars functor induced by a morphism of algebras
  $\mathsf{B}\to \mathsf{A}$. 
\end{lemma}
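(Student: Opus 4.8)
The plan is to realise $B$ as the full subcategory of $\mathsf{A}\text-\mathrm{Mod}_f$ of modules annihilated by a suitable two-sided ideal, so that $\mathsf{B}$ will be the corresponding quotient algebra and $K$ the associated restriction of scalars. First I would consider the family $\mathcal{N}$ of \emph{left} ideals $N\subseteq\mathsf{A}$ with $\mathsf{A}/N\in B$. It is nonempty (the improper left ideal $\mathsf{A}$ belongs to it, since $\mathsf{A}/\mathsf{A}=0\in B$) and closed under finite intersections: $\mathsf{A}/(N_1\cap N_2)$ embeds into $\mathsf{A}/N_1\oplus\mathsf{A}/N_2$, which lies in $B$ by closure under finite direct sums, hence $\mathsf{A}/(N_1\cap N_2)\in B$ by closure under subobjects. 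Since $\mathsf{A}$ is finite dimensional it has finite length as a left module, so descending chains of left ideals stabilise and $\mathcal{N}$ --- being closed under finite intersections --- is closed under arbitrary intersections; thus $N_0:=\bigcap_{N\in\mathcal{N}}N$ lies in $\mathcal{N}$ and is its least element.

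Next I would verify that $N_0$ is two-sided. Put $J=\mathrm{Ann}_{\mathsf{A}}(\mathsf{A}/N_0)$, a two-sided ideal with $J\subseteq N_0$ (evaluate the action on $1+N_0$). The action map $\mathsf{A}\to\mathrm{End}_{\kk}(\mathsf{A}/N_0)$ has kernel $J$, so it realises $\mathsf{A}/J$ as a left $\mathsf{A}$-submodule of $\mathrm{End}_{\kk}(\mathsf{A}/N_0)$, which as a left $\mathsf{A}$-module (acting by post-composition) is isomorphic to $(\mathsf{A}/N_0)^{\oplus\dim_{\kk}(\mathsf{A}/N_0)}$. This power lies in $B$ by closure under finite direct sums, hence so does $\mathsf{A}/J$ by closure under subobjects; thus $J\in\mathcal{N}$ and therefore $N_0\subseteq J$. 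So $N_0=J$ is two-sided, and I set $\mathsf{B}:=\mathsf{A}/N_0$, a finite-dimensional $\kk$-algebra, with $\pi:\mathsf{A}\twoheadrightarrow\mathsf{B}$ the quotient map.

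Finally I would identify $B$ with the essential image of restriction of scalars $\pi^{*}:\mathsf{B}\text-\mathrm{Mod}_f\to\mathsf{A}\text-\mathrm{Mod}_f$. If $N_0M=0$ then $M$ is a finitely generated $\mathsf{B}$-module, hence a quotient of some $\mathsf{B}^{n}=(\mathsf{A}/N_0)^{n}\in B$, so $M\in B$ by closure under quotients. Conversely, if $M\in B$ then for each $m\in M$ the cyclic submodule $\mathsf{A}m\cong\mathsf{A}/\mathrm{Ann}_{\mathsf{A}}(m)$ lies in $B$ by closure under subobjects, so $\mathrm{Ann}_{\mathsf{A}}(m)\in\mathcal{N}$, whence $N_0\subseteq\mathrm{Ann}_{\mathsf{A}}(m)$ and $N_0m=0$; hence $N_0M=0$. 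So the essential image of $\pi^{*}$ is exactly $B$; and since $\pi$ is surjective $\pi^{*}$ is fully faithful, so it corestricts to an equivalence $\mathsf{B}\text-\mathrm{Mod}_f\simeq B$ whose composite with $K$ is $\pi^{*}$. This is precisely the assertion that, up to this equivalence, $K$ is restriction of scalars along $\pi:\mathsf{A}\twoheadrightarrow\mathsf{B}$.

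The step I expect to be the main point is the two-sidedness of $N_0$: the hypotheses only hand us a least \emph{left} ideal in $\mathcal{N}$, and one needs both the finite-dimensionality of $\mathsf{A}$ and the embedding of $\mathsf{A}/\mathrm{Ann}_{\mathsf{A}}(\mathsf{A}/N_0)$ into a finite power of $\mathsf{A}/N_0$ to promote it to a two-sided ideal, hence to an algebra quotient. All three closure assumptions are genuinely used --- subobjects and finite direct sums repeatedly, and quotients in the final identification.
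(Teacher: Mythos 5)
Your proof is correct, but it takes a genuinely different route from the paper's. The paper argues categorically: it observes that $K$ has a left adjoint $K^\ell$ (sending $\mathsf M$ to its largest quotient lying in $B$), that $K^\ell\mathsf A$ is then a projective strong generator of $B$, and invokes Morita theory to get $B\simeq\mathsf B\text-\mathrm{Mod}_f$ with $\mathsf B=B(K^\ell\mathsf A,K^\ell\mathsf A)$, identifying $K$ with restriction of scalars along $\mathsf A\cong\mathsf A\text-\mathrm{Mod}_f(\mathsf A,\mathsf A)\to\mathsf B$. Your argument is more elementary and ring-theoretic: you locate the least left ideal $N_0$ with $\mathsf A/N_0\in B$ (your $\mathsf A/N_0$ is exactly the paper's $K^\ell\mathsf A$), and your key step --- embedding $\mathsf A/\mathrm{Ann}_{\mathsf A}(\mathsf A/N_0)$ into a finite power of $\mathsf A/N_0$ to force $N_0$ to be two-sided --- is a genuine extra idea that the categorical route sidesteps by taking endomorphism rings instead of quotient rings. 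What the paper's approach buys is brevity and generality (it would survive weakening the hypotheses to ``reflective exact subcategory with a projective generator''); what yours buys is sharper information, namely that $\mathsf B$ is a \emph{quotient} algebra $\mathsf A/N_0$ by a two-sided ideal and that $B$ is precisely the subcategory of modules killed by $N_0$, which makes the exactness of $K$ and of the functors in Lemma \ref{l:2} completely transparent. One incidental point: both your proof and the paper's actually produce an algebra morphism $\mathsf A\to\mathsf B$ (as in the paper's displayed \eqref{eq:13}), not $\mathsf B\to\mathsf A$ as the lemma's statement literally says; that direction in the statement is a slip, and your orientation is the correct one for $K$ to land in $\mathsf A$-modules.
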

\begin{proof}
  The functor $K$ has a left
  adjoint $K^\ell$ defined by $K^\ell\mathsf{M}$= biggest quotient
  of $\mathsf{M}$ that lies in $B$. Since $K$ is right exact,
  $K^\ell\mathsf{A}$ is projective and 
  moreover it is a generator (strong generator) because $K$ is faithful
  (reflects isomorphisms). Therefore $B$ is equivalent to
  $\mathsf{B}\text-\mathrm{Mod}_f$ where
  $\mathsf{B}=B(K^\ell\mathsf{A},K^\ell\mathsf{A})$, the equivalence
  given by $b\mapsto B(K^\ell\mathsf A,b)$.
  Now one can check that the functor $\mathsf
  B\text-\mathrm{Mod}_f\to\mathsf A\text-\mathrm{Mod}_f$ is isomorphic
  to the restriction of scalars functor along
  \begin{equation}
    \label{eq:13}
    \mathsf A\cong\mathsf A\text-\mathrm{Mod}_f(\mathsf A,\mathsf A)
    \xrightarrow{K^\ell}B(K^\ell\mathsf A,K^\ell\mathsf A)=\mathsf B
  \end{equation}
\end{proof}

\begin{lemma}\label{l:2} 
  Suppose $\mathsf A_i,\mathsf B_i$ are finite dimensional \kk-algebras,
  $f_i:\mathsf A_i\to\mathsf B_i$ ($i=1,2$) are algebra morphisms, and
  $f_i^*:\mathsf{B}_i\text-\mathrm{Mod}_f\to
  \mathsf{A}_i\text-\mathrm{Mod}_f$ the corresponding restriction of
  scalars functors. 
  Then $f_1^*\boxtimes f_2^*$ is an exact functor. 
\end{lemma}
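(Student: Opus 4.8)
The plan is to reduce the statement to the exactness of a single restriction of scalars functor, using the identification of box tensor products of module categories provided by Example \ref{ex:1}. First observe that $\mathsf A_1,\mathsf A_2,\mathsf B_1,\mathsf B_2$ are finite dimensional over \kk, hence Noetherian and so coherent, and so are the tensor products $\mathsf A_1\otimes \mathsf A_2$ and $\mathsf B_1\otimes \mathsf B_2$, which are again finite dimensional. Consequently all the categories $\mathsf A_i\text-\mathrm{Mod}_f$, $\mathsf B_i\text-\mathrm{Mod}_f$, $(\mathsf A_1\otimes \mathsf A_2)\text-\mathrm{Mod}_f$ and $(\mathsf B_1\otimes \mathsf B_2)\text-\mathrm{Mod}_f$ are abelian, and by Example \ref{ex:1} there are equivalences $\mathsf B_1\text-\mathrm{Mod}_f\boxtimes \mathsf B_2\text-\mathrm{Mod}_f\simeq(\mathsf B_1\otimes \mathsf B_2)\text-\mathrm{Mod}_f$ and $\mathsf A_1\text-\mathrm{Mod}_f\boxtimes \mathsf A_2\text-\mathrm{Mod}_f\simeq(\mathsf A_1\otimes \mathsf A_2)\text-\mathrm{Mod}_f$ under which the universal bilinear functor $\chi$ corresponds to $\otimes_\kk$. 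The algebra morphism $f_1\otimes f_2:\mathsf A_1\otimes \mathsf A_2\to\mathsf B_1\otimes \mathsf B_2$ induces a restriction of scalars functor $(f_1\otimes f_2)^*$ on module categories; it is exact, and it sends finitely presentable modules to finitely presentable modules (a finite dimensional $\mathsf B_1\otimes \mathsf B_2$-module is finite dimensional over \kk, hence finitely generated and so finitely presentable over the Noetherian algebra $\mathsf A_1\otimes \mathsf A_2$), so it restricts to an exact functor $(\mathsf B_1\otimes \mathsf B_2)\text-\mathrm{Mod}_f\to(\mathsf A_1\otimes \mathsf A_2)\text-\mathrm{Mod}_f$. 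It therefore suffices to show that, modulo the two equivalences above, $f_1^*\boxtimes f_2^*$ is isomorphic to $(f_1\otimes f_2)^*$.

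To this end, note first that each $f_i^*$ is exact and cocontinuous (colimits of modules are created by the forgetful functor to \kk-modules), so $f_1^*\boxtimes f_2^*$ is a well-defined right exact functor. By the definition of $\boxtimes$ on morphisms together with the universal property of Theorem \ref{thm:9}, $f_1^*\boxtimes f_2^*$ is the essentially unique right exact functor whose composite with $\chi$ is isomorphic to $\chi\circ(f_1^*\otimes f_2^*)$; transporting along the equivalences of Example \ref{ex:1}, this says it is the essentially unique right exact functor $F$ with $F\circ\otimes_\kk\cong\otimes_\kk\circ(f_1^*\otimes f_2^*)$. Hence it is enough to check that $(f_1\otimes f_2)^*$ enjoys this property, i.e. to exhibit an isomorphism, natural in each variable separately, between $(f_1\otimes f_2)^*(M\otimes_\kk N)$ and $(f_1^*M)\otimes_\kk(f_2^*N)$ for $M\in\mathsf B_1\text-\mathrm{Mod}_f$ and $N\in\mathsf B_2\text-\mathrm{Mod}_f$. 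Both of these are the \kk-module $M\otimes_\kk N$ equipped with the $(\mathsf A_1\otimes \mathsf A_2)$-action $(a_1\otimes a_2)(m\otimes n)=f_1(a_1)m\otimes f_2(a_2)n$, so the identity map furnishes the required isomorphism, and it is visibly natural in $M$ and in $N$.

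Combining the two paragraphs, $f_1^*\boxtimes f_2^*\cong(f_1\otimes f_2)^*$ under the equivalences of Example \ref{ex:1}, and the latter is exact since restriction of scalars along a ring morphism is exact; therefore $f_1^*\boxtimes f_2^*$ is exact. The only slightly delicate point in the argument is the verification that the comparison isomorphism is genuinely natural in each variable separately, which is what licenses the appeal to the universal property of $\boxtimes$; here this is immediate because the two composites coincide on the nose, the action formula above depending separately functorially on $M$ and on $N$.
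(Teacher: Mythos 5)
Your proposal is correct and follows essentially the same route as the paper: identify the box tensor products with $(\mathsf B_1\otimes\mathsf B_2)\text-\mathrm{Mod}_f$ and $(\mathsf A_1\otimes\mathsf A_2)\text-\mathrm{Mod}_f$ via Example \ref{ex:1}, use the universal property of Theorem \ref{thm:9} to identify $f_1^*\boxtimes f_2^*$ with $(f_1\otimes f_2)^*$, and conclude from exactness of restriction of scalars. The extra verifications you supply (finite dimensionality giving coherence, preservation of finite presentability, naturality in each variable) are correct details the paper leaves implicit.
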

\begin{proof}
  Accordingly to Example \ref{ex:1}, $\mathsf{A}_i\text-\mathrm{Mod}_f
  \boxtimes \mathsf{B}_i\text-\mathrm{Mod}_f$ is equivalent to
  $\mathsf{A}_i\otimes\mathsf{B}_i\text-\mathrm{Mod}_f$. The functor 
  $f_1^*\boxtimes f_2^*$ is the unique up to isomorphism right exact
  functor fitting in a diagram as exhibited below.
  \begin{equation*}
    \diagram
    \mathsf{A_1}\text-\mathrm{Mod}_f\otimes
    \mathsf{B_1}\text-\mathrm{Mod}_f \ar[r]^-{\otimes_{\kk}}
    \ar[d]_{f_1^*\otimes f_2^*}
    &
    \mathsf{A_1}\otimes\mathsf{B_2}\text-\mathrm{Mod}_f
    \ar[d]^{f_1^*\boxtimes f_2^*}
    \\
    \mathsf{A_2}\text-\mathrm{Mod}_f\otimes
    \mathsf{B_2}\text-\mathrm{Mod}_f \ar[r]^-{\otimes_{\kk}}
    &
    \mathsf{A_2}\otimes\mathsf{B_2}\text-\mathrm{Mod}_f
    \ar@{}[ul]|-\cong
    \enddiagram
  \end{equation*}
  Since $(f_1\otimes f_2)^*$ clearly satisfies that property, there is
  a natural isomorphism between this restriction of scalars functor
  and $f_1\boxtimes f_2$. The result follows from the
  fact that restriction of scalars functors are exact.  
  % \begin{equation*}
  %   (f_1\otimes f_2)^*:
  %   \mathsf{A}_1\otimes \mathsf{B}_1\text-\mathrm{Mod}_f \to
  %   \mathsf{A}_2\otimes \mathsf{B}_2\text-\mathrm{Mod}_f
  % \end{equation*}    
  % Any restriction of scalars functor is exact. 
\end{proof}

  Recall from Section \ref{sec:bicolimits} the notion of a bicolimit
  of a pseudofunctor. Here we will be interested in colimits of
  pseudofunctors $F:J\to \kk\text-\mathrm{Mod}\text-\Cat$ from
  a filtered category $J$.
  By giving an explicit construction of $\operatorname{bicolim}F$ the
  following can be proved. 
  \begin{lemma}
    \label{l:15}
    Let $J$ be a filtered category.
    If for each arrow $\alpha:j\to k$ in $J$ the functor
    $F\alpha:Fj\to Fk$ is right exact (resp. exact) between finitely
    cocomplete (resp. abelian) categories, then
    $\operatorname{bicolim}F$ is finitely cocomplete (resp. abelian)
    and the coprojections $Fj\to\operatorname{bicolim}F$ are right
    exact (resp. exact).
  \end{lemma}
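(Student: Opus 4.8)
The plan is to write down the standard explicit model of a filtered bicolimit of $\kk$-linear categories and then verify both assertions directly on it. Concretely, let $\operatorname{bicolim}F$ have as objects the pairs $(j,x)$ with $j\in J$ and $x\in Fj$, and let a morphism $(j,x)\to(k,y)$ be an equivalence class of quadruples $(l,\alpha,\beta,\phi)$, where $\alpha\colon j\to l$ and $\beta\colon k\to l$ in $J$ and $\phi\colon (F\alpha)x\to(F\beta)y$ in $Fl$; two such are identified when they agree after transporting along a common further arrow of $J$, modulo the coherence isomorphisms of the pseudofunctor $F$. Composition is defined using the filteredness of $J$ to choose a common cocone, hom-sets are then filtered colimits of $\kk$-modules along $\kk$-linear maps (hence again $\kk$-modules, with bilinear composition), and the coprojection $\iota_j\colon Fj\to\operatorname{bicolim}F$ sends $x$ to $(j,x)$. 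I would first record that this category carries a canonical pseudonatural transformation $F\Rightarrow\Delta\operatorname{bicolim}F$ satisfying the universal property of Section \ref{sec:bicolimits}; this is the argument of \cite[I.8]{SGA4} adapted to pseudofunctors, and the only subtlety is keeping track of the coherence isomorphisms, so I would state it and relegate the routine check.

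The technical core is a \emph{descent principle}: because $J$ is filtered and a finite diagram — or the kernel-pair/cokernel-pair data witnessing that a morphism is monic/epic — involves only finitely many objects, arrows and relations, any such configuration in $\operatorname{bicolim}F$ is, up to isomorphism, the $\iota_j$-image of a like configuration in a single $Fj$. I would isolate this as a small lemma and then read off the statements. For finite cocompleteness: lift a given finite diagram to $Fj$, form its colimit there, and push it forward with $\iota_j$; to see this is a colimit in $\operatorname{bicolim}F$ one needs $\iota_j$ right exact, and this is where the hypothesis enters — an arbitrary cocone on the lifted diagram in $\operatorname{bicolim}F$ becomes, after a further lift along some $\alpha\colon j\to l$, a cocone in $Fl$, and since $F\alpha$ is right exact it factors uniquely through $(F\alpha)(\operatorname{colim})$, yielding the required unique mediating morphism downstairs. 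The same argument applied to kernels shows that $\iota_j$ is left exact whenever the $F\alpha$ are exact; thus in the abelian case the coprojections are exact.

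For the abelian case I would then assemble the axioms for $\operatorname{bicolim}F$: it is $\kk$-linear, has a zero object $(j,0)$, has finite biproducts (a finite coproduct in a pointed $\kk$-linear category), has kernels and cokernels (lift the morphism to some $Fj$, take the kernel or cokernel there, push forward, and invoke exactness of $\iota_j$ and of the $F\alpha$ to check it is a kernel or cokernel in $\operatorname{bicolim}F$), and finally — the part I expect to be the main obstacle — every monomorphism is a kernel and every epimorphism a cokernel. For this I would show that if $f\colon x\to y$ in $Fj$ becomes monic in $\operatorname{bicolim}F$, then $(F\alpha)f$ is already monic in $Fl$ for a suitable $\alpha\colon j\to l$: the two projections of the kernel pair of $f$ in $Fj$ are identified in the bicolimit, hence identified in some $Fl$, and since $F\alpha$ preserves kernel pairs this forces $(F\alpha)f$ monic; in the abelian category $Fl$ it is then the kernel of its cokernel, and applying the already-established exactness of $\iota_l$ exhibits $f$ as a kernel in $\operatorname{bicolim}F$. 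The dual argument handles epimorphisms, completing the verification that $\operatorname{bicolim}F$ is abelian.
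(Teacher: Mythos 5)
Your proposal is correct and follows exactly the route the paper indicates: the paper offers no written proof of Lemma \ref{l:15}, saying only that it ``can be proved'' by giving an explicit construction of $\operatorname{bicolim}F$ (with a pointer to SGA4 for the strict case), and your argument is precisely that explicit construction carried out, together with the lifting/descent verifications of finite cocompleteness, exactness of the coprojections, and the abelian axioms. Nothing in your write-up conflicts with the paper; it simply supplies the details the paper omits.
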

  Always assuming that $J$ is filtered, when $F:J\to
  (\kk\text-\mathrm{Mod})\text-\Cat$ is not only a 
  pseudofunctor but a 2-functor, then its colimit (in the usual strict
  sense)
  is also a bicolimit. This last fact appears in \cite[Expose VI, Exercice 6.8,
  p. 272]{SGA4}.

\begin{proof}[Proof of Proposition \ref{prop:4}]
  According to Lemma \ref{l:7} we can write $A$ and $B$ as colimit of
  filtered of diagrams of finitely cocomplete categories and right exact
  functors, $A=\colim_\alpha A_\alpha$ and $B=\colim_\beta
  B_\beta$, where each $A_\alpha,B_\beta$ is equivalent to a category
  of finite dimensional modules over a finite dimensional algebra.
  In view of
  Theorem \ref{thm:3}, we need to show that $A\boxtimes B$ is
  abelian. As remarked at the end of Section \ref{sec:tens-prod-finit},
  $\boxtimes$ preserves bicolimits in each variable, so
  \begin{equation}
    \label{eq:9}
    A_\alpha\boxtimes B_\beta\to A\boxtimes B
  \end{equation}
  is a bicolimit of the pseudofunctor $J\times J\to\mathbf{Rex}$ given by
  $(\alpha,\beta)\mapsto A_\alpha\boxtimes B_\beta$.
  By Example \ref{ex:1} and Lemma
  \ref{l:2}, for each $(\alpha,\beta)\to(\alpha',\beta')$ in $J\times
  J$, each $A_\alpha\boxtimes B_\beta\to A_{\alpha'}\boxtimes
  B_{\beta'}$ is an exact functor between abelian categories, an so
  $A\boxtimes B$ is abelian by Lemma \ref{l:15}.   
\end{proof}

\section{Semisimple categories}
\label{sec:semis-categ}

In this section we study in some detail tensor products of semisimple
$k$-linear abelian categories satisfying an extra finiteness condition, namely
each object has finite length. This particular case is of interest as
it includes fusion categories \cite{Etiongof:OnFusionCats}.
It is well known that, when $k$ is
algebraically closed, the Deligne tensor product of two such categories $A,B$
exists and is a semisimple category with simple objects corresponding
to pairs of simple objects $(a,b)$ of $A$ and $B$ respectively. The
results of this section show that this easy description
is a consequence of the fact that semisimple abelian categories
with objects of finite length are free, in the appropriate sense.

Recall from Section \ref{sec:compl-under-colim} that the completion of
$X$ under finite colimits can be described as the smallest replete full
subcategory $\fin(X)\subset[X^\op,\V]$ closed under finite colimits
and containing the representable presheaves.

\begin{prop}
  \label{prop:2}
  For a \kk-linear category $A$ the following are equivalent.
  \begin{enumerate}
  \item
    $A$ is semisimple abelian with objects of finite length.
  \item
    $A$ is equivalent to $\fin(X)$ where the category $X$ is a
    coproduct in \VCat\ of division \kk-algebras.
  \end{enumerate}
  In this case, $X$ is (a skeleton of) the full subcategory of simple
  objects.
\end{prop}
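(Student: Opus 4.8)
The plan is to read both implications off the characterisation recalled in Section~\ref{sec:finite-colimits}: a finitely cocomplete category carrying a strong generating full subcategory of projective objects is canonically the free finite-cocompletion of that subcategory. So the work is to produce such a subcategory (for $(1)\Rightarrow(2)$) and to compute $\fin$ of a coproduct of division algebras (for $(2)\Rightarrow(1)$).

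For $(1)\Rightarrow(2)$, assume $A$ is semisimple abelian with objects of finite length and let $X\hookrightarrow A$ be a skeleton of the full subcategory of simple objects. Since $A$ is $\kk$-linear abelian, Schur's lemma makes each $A(s,s)$ a division $\kk$-algebra, while the standard argument (a nonzero map between simples has image a nonzero subobject of its target and kernel a proper subobject of its source, hence is invertible) gives $A(s,t)=0$ for non-isomorphic simples $s,t$. Thus $X$ \emph{is} the coproduct in $\VCat$, over the simple objects $s$, of the one-object categories $\Sigma A(s,s)$. It then remains to check the hypotheses of the criterion. Every object of $A$ is a finite direct sum of simples, so every object is projective and $X$ consists of projectives (equivalently, each $A(s,-)$ preserves finite colimits). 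For the strong-generator property, suppose $f\colon a\to b$ has $A(s,f)$ invertible for all simple $s$; applying the exact functors $A(s,-)$ to the exact sequence $0\to\operatorname{ker}f\to a\to b\to\operatorname{coker}f\to 0$ forces $A(s,\operatorname{ker}f)=A(s,\operatorname{coker}f)=0$ for every $s$, and since $\operatorname{ker}f$ and $\operatorname{coker}f$ are finite direct sums of simples (a nonzero such sum has a simple summand $s_0$ and hence $A(s_0,-)\neq 0$ on it), both vanish, so $f$ is invertible. The criterion now yields $A\simeq\fin(X)$.

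For $(2)\Rightarrow(1)$, write $X=\coprod_{i\in I}\Sigma D_i$ with each $D_i$ a division $\kk$-algebra. Then $[X^\op,\V]\simeq\prod_{i}[\Sigma D_i^\op,\V]$, finite colimits are computed componentwise, and a finite colimit of representables (each supported on a single component) is again supported on finitely many components; conversely every finitely supported tuple is a finite coproduct of images of representables under the coproduct inclusions, which are colimit-preserving. Hence $\fin(X)$ is the ``restricted product'' of the $\fin(\Sigma D_i)$, i.e. the full subcategory of $\prod_i\fin(\Sigma D_i)$ of tuples vanishing in all but finitely many components. Each $\fin(\Sigma D_i)\simeq D_i\text-\mathrm{Mod}_f$ is the category of finite-dimensional $D_i$-vector spaces: semisimple abelian, all objects of finite length, with the regular module $D_i$ (the representable $X(-,x_i)$) as unique simple object. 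For any finite diagram in the restricted product only finitely many components are nonzero, so kernels, cokernels and biproducts are computed componentwise inside a finite product of abelian categories; therefore $\fin(X)$ is abelian, and clearly semisimple with objects of finite length. Moreover the simple objects of $\fin(X)$ are exactly, up to isomorphism, the representables $X(-,x_i)$, so $X$ is a skeleton of the simples; in $(1)\Rightarrow(2)$ this last assertion holds by construction.

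The main obstacle here is not depth but bookkeeping: one must check cleanly that $X$ is a \emph{small} strong generator of projectives (the skeleton of the simples is a set under the running smallness hypotheses) so that the free-cocompletion criterion of Section~\ref{sec:finite-colimits} genuinely applies, and one must be careful about the handedness of the module categories $D_i\text-\mathrm{Mod}_f$ when identifying $\fin(\Sigma D_i)$ with finite-dimensional $D_i$-vector spaces — harmless for a division algebra, but worth a word.
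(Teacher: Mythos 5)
Your proof is correct and follows essentially the same route as the paper's: for $(1)\Rightarrow(2)$ both take $X$ to be a skeleton of the full subcategory of simple objects, note that every object is projective and that $X$ is a strong generator, and invoke the criterion of Section~\ref{sec:finite-colimits} to get $A\simeq\fin(X)$; for $(2)\Rightarrow(1)$ both reduce to the componentwise description of presheaves on a coproduct of division algebras, identifying the representables as the simples and $\fin(X)$ as their finite direct sums. You merely supply more detail than the paper at a few points (Schur's lemma, the strong-generator check via exactness of $A(s,-)$, and the restricted-product description of $\fin(X)$), all of which the paper treats as routine.
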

\begin{proof}
  There are a number of possible proofs of this proposition, the
  following one being a relatively straightforward one. 
  Suppose $A$ is semisimple abelian with objects of finite length, and let $X$
  be (a skeleton of) the full subcategory of simple objects, with
  inclusion $K:X\hookrightarrow{} A$. It is easy to see that $X$ is a
  strong generator because every object of $A$ is isomorphic to a
  direct sum of objects of $X$. Every object in $A$ is projective, so
  $A\simeq \fin(X)$ ---see Section \ref{sec:finite-colimits}.

  To prove the converse, write $X\cong \sum_\alpha X_\alpha$, where
  $X_\alpha$ is a category with one object and one hom that is a
  division \kk-algebra $\mathsf A_\alpha$. Each category
  $\mathsf A_\alpha\text-\mathrm{Mod}$ is semisimple, and so is their product
  $\prod_\alpha   \mathsf A_\alpha\text-\mathrm{Mod}$; there is a
  simple object $s(\alpha)$ for each $\alpha$, with
  $s(\alpha)_\beta=0$ if $\alpha\neq\beta$ and
  $s(\alpha)_\alpha=\mathsf A_\alpha$.
  Each one of these simple objects correspond to a representable
  presheaf under the following equivalence.   
  \begin{equation*}
    [X^\op,\V]\cong \prod_\alpha[X_\alpha^\op,\V]\simeq \prod_\alpha
    \mathsf A_\alpha\text-\mathrm{Mod}
  \end{equation*}
  Therefore, $[X^\op,\V]$ is semisimple abelian and the simple objects are the
  representables. 
  By definition $\fin(X)$
  is the smallest replete full subcategory closed under colimits and
  containing the representables, so it is semisimple and it consists
  of the finite direct sums of simple objects.  
\end{proof}

Observe that in the above proposition we also proved that such a
category $A$ is the free completion of $X$ under absolute colimits,
and also the free completion of $X$ under finite direct sums. 

\begin{thm}
  \label{thm:2}
  Suppose $A,B$ are two semisimple abelian categories with objects of
  finite length, and assume the endo-hom of each simple object is the
  ground commutative ring \kk.
  \begin{enumerate}
  \item \label{item:18}
    Then $A\boxtimes B$ satisfies these
    same properties; in particular Deligne's tensor product of $A,B$
    exists and it is (equivalent to) $A\boxtimes B$. 
  \item\label{item:19}
    The simple objects in $A\boxtimes B$ are (up to isomorphism) the
    images of pairs of simple objects
    under the universal functor $A\otimes B\to A\boxtimes B$.
  \item\label{item:20}
    The universal functor $A\otimes B\to A\boxtimes B$ is fully
    faithful. 
\end{enumerate}
\end{thm}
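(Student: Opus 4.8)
The plan is to reduce everything to Proposition \ref{prop:2} together with the monoidality of $\fin$ recorded in Remark \ref{rmk:1}. By Proposition \ref{prop:2}, write $A \simeq \fin(X)$ and $B \simeq \fin(Y)$, where $X = \sum_\alpha X_\alpha$ and $Y = \sum_\beta Y_\beta$ are coproducts in $\VCat$ of division \kk-algebras; moreover the hypothesis that each simple object has endo-hom equal to \kk\ says precisely that each $X_\alpha$ and each $Y_\beta$ is (the one-object \kk-category on) \kk\ itself. By Remark \ref{rmk:1} there is a canonical equivalence $\fin(X) \boxtimes \fin(Y) \simeq \fin(X \otimes Y)$, so $A \boxtimes B \simeq \fin(X \otimes Y)$. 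Now $X \otimes Y \cong \bigl(\sum_\alpha X_\alpha\bigr) \otimes \bigl(\sum_\beta Y_\beta\bigr) \cong \sum_{\alpha,\beta} (X_\alpha \otimes Y_\beta)$, using that $\otimes$ distributes over coproducts in $\VCat$, and each $X_\alpha \otimes Y_\beta$ is the one-object \kk-category with endo-hom $\kk \otimes_{\kk} \kk \cong \kk$, hence again a division \kk-algebra (indeed \kk). So $X \otimes Y$ is a coproduct in $\VCat$ of division \kk-algebras, and by Proposition \ref{prop:2} (applied in the reverse direction) $\fin(X \otimes Y)$ is semisimple abelian with objects of finite length, and $X \otimes Y$ is its full subcategory of simple objects. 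This gives part \ref{item:18}: $A \boxtimes B$ has the stated properties, and since it is abelian, Theorem \ref{thm:3} gives that Deligne's tensor product exists and coincides with $A \boxtimes B$.

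For part \ref{item:19}, I would trace through the identifications above to see what the simple objects of $\fin(X \otimes Y)$ are: by the last sentence of Proposition \ref{prop:2} they are (up to isomorphism) the representables, i.e. the image of $X \otimes Y \hookrightarrow \fin(X \otimes Y)$. Under the equivalence $\fin(X \otimes Y) \simeq A \boxtimes B$, the composite $X \otimes Y \to \fin(X\otimes Y) \simeq A\boxtimes B$ must be identified with the restriction of the universal functor $\chi : A \otimes B \to A \boxtimes B$ along $X \otimes Y \hookrightarrow A \otimes B$; this follows because the universal functor $X \otimes Y \to \fin(X\otimes Y)$ is, by Remark \ref{rmk:1}, the tensor product of the universal functors $X \to \fin(X) \simeq A$ and $Y \to \fin(Y) \simeq B$, which are precisely the inclusions of the simple objects (last sentence of Proposition \ref{prop:2}). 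So a simple object of $A \boxtimes B$ is, up to isomorphism, $\chi(a,b)$ for $a$ simple in $A$ and $b$ simple in $B$.

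For part \ref{item:20}, I would invoke Remark \ref{rmk:6}: the universal functor $\chi : A \otimes B \to A \boxtimes B$ is fully faithful as soon as finite colimits in $A$ and $B$ are absolute. Since $A, B$ are semisimple with objects of finite length, every object is a finite direct sum of simple objects, so every finite colimit (being built from finite biproducts, which are absolute, and splittings of idempotents via semisimplicity) is absolute; alternatively one can cite the second bullet of Remark \ref{rmk:6} directly, which lists semisimple categories as an example. Hence $\chi$ is fully faithful.

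I expect the only genuine point needing care to be the bookkeeping in part \ref{item:19} — verifying that the equivalence $A \boxtimes B \simeq \fin(X \otimes Y)$ coming from Remark \ref{rmk:1} really does carry the representables of $\fin(X \otimes Y)$ to the objects $\chi(a,b)$, rather than merely knowing that the two sets of objects have the same size. This is a diagram-chase through the naturality of the monoidal structure on $\fin$, and I would phrase it by noting that both $X\otimes Y \to A\boxtimes B$ and $X \otimes Y \to \fin(X\otimes Y) \to A\boxtimes B$ are the essentially unique right-exact-in-each-variable extension of the functor $X \otimes Y \hookrightarrow A\otimes B \xrightarrow{\chi} A\boxtimes B$ composed appropriately, so they agree up to isomorphism by the universal property of $\fin$. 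Parts \ref{item:18} and \ref{item:20} are then immediate from the cited results.
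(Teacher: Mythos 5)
Your proposal is correct and follows essentially the same route as the paper: reduce to $A\simeq\fin(X)$, $B\simeq\fin(Y)$ with $X,Y$ discrete \kk-linear categories (the endo-hom hypothesis collapsing the division algebras of Proposition \ref{prop:2} to \kk), apply the monoidality of $\fin$ from Remark \ref{rmk:1} and Proposition \ref{prop:2} in reverse, cite Theorem \ref{thm:3} for existence, identify the simple objects via the compatibility of the universal functors (the paper records this as a commuting diagram), and invoke Remark \ref{rmk:6} for full faithfulness. No gaps.
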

\begin{proof}
  $A,B$ are equivalent to $\fin(X),\fin(Y)$ where $X,Y$ are {\em
    discrete}\/ $\kk$-linear categories. We have $A\boxtimes B\simeq \fin(X\otimes
  Y)$, and $X\otimes Y$ is a discrete category too. By Proposition
  \ref{prop:2} 
  $A\boxtimes B$ is semisimple abelian with objects of finite length,
  and each simple object has endo-hom \kk. Deligne's tensor product of
  $A,B$ exists by Theorem \ref{thm:3}.
  The diagram below, that
  commutes up to isomorphism, together with Proposition \ref{prop:2}
  proves the claim \ref{item:19}.
  \begin{equation*}
    \xymatrixrowsep{.5cm}
    \diagram
    X\otimes Y\ar[r]\ar[rrd]&
    \fin(X)\otimes\fin(Y)\ar[r]\ar@{}[dr]^\cong&
    \fin(X)\boxtimes\fin(Y)\ar[d]^\simeq\\
    &&\fin(X\otimes Y)
    \enddiagram
  \end{equation*}
  To show \ref{item:20} we apeal to Remark \ref{rmk:6}.
  % recall from Section \ref{sec:tens-prod-finit}
  % that the universal functor is fully faithful if and only if the
  % functors $A\to A\otimes B$ and $B\to A\otimes B$ given by $x\mapsto
  % (x,b)$ and $y\mapsto (a,y)$ preserve finite colimits for all $a\in
  % A$, $b\in B$. This is clearly the case when $A,B$ are semisimple, as
  % any cokernel will be a projection from a direct sum. In other words,
  % in the semisimple case finite colimits can be constructed from
  % finite direct sums and splitting of idempotents, so they are
  % absolute colimits. 
\end{proof}

\begin{cor}
  \label{cor:1}
  Assume \kk\ is an algebraically closed field. Then the tensor
  product $A\boxtimes B$ of two semisimple abelian categories with
  finite dimensional homs and 
  objects of finite length $A,B$ has these same properties.% , and it is
  % their Deligne's tensor product.
  % Moreover,
  % the simple objects of $A\boxtimes B$ are the image under the
  % universal $A\otimes B\to A\boxtimes B$ of the objects $(a,b)$ with
  % $a,b$ simple objects. 
\end{cor}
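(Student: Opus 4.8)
The plan is to deduce this immediately from Theorem~\ref{thm:2} once we observe that the extra hypothesis of that theorem ---that the endomorphism algebra of each simple object be the ground ring--- is automatic over an algebraically closed field. First I would invoke Schur's lemma in the form valid for $\kk$-linear abelian categories: for a simple object $s$ of $A$, the algebra $A(s,s)$ is a division $\kk$-algebra. Since $A$ has finite-dimensional homs, $A(s,s)$ is a \emph{finite-dimensional} division algebra over the algebraically closed field $\kk$, and hence equals $\kk$. The same applies to $B$, so $A$ and $B$ meet the hypotheses of Theorem~\ref{thm:2}.

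Applying Theorem~\ref{thm:2}\ref{item:18} then gives at once that $A\boxtimes B$ is semisimple abelian with objects of finite length and that each of its simple objects again has endo-hom $\kk$; part~\ref{item:19} identifies the simple objects of $A\boxtimes B$ as the images $\chi(a,b)$ of pairs of simple objects of $A$ and $B$. Equivalently, one can argue directly via Proposition~\ref{prop:2}, writing $A\simeq\fin(X)$, $B\simeq\fin(Y)$ with $X,Y$ discrete $\kk$-linear categories whose hom-algebras are $\kk$, so that $A\boxtimes B\simeq\fin(X\otimes Y)$ with $X\otimes Y$ again discrete and with hom-algebras $\kk$.

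It remains only to check finite-dimensionality of the homs of $A\boxtimes B$. Since $A\boxtimes B$ is semisimple with all objects of finite length, every object is a finite direct sum of simple objects; and by the preceding paragraph, between simple objects the hom-space is $0$ when they are non-isomorphic and $\kk$ when they are isomorphic. Hence every hom-space of $A\boxtimes B$ is a finite direct sum of copies of $\kk$, in particular finite-dimensional. Deligne's tensor product then exists and coincides with $A\boxtimes B$ by Theorem~\ref{thm:3}, as already recorded in Theorem~\ref{thm:2}.

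There is no substantial obstacle here; the only point that deserves a line of justification is the reduction in the first paragraph, namely the combination of Schur's lemma with the algebraic closure of $\kk$ to force $A(s,s)=\kk$ ---together with the elementary remark that, for a semisimple category with objects of finite length, having finite-dimensional homs is equivalent to all simple objects having finite-dimensional endomorphism algebras, which is why the conclusion of Theorem~\ref{thm:2} already delivers everything we need.
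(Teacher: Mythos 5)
Your argument is correct and is essentially the paper's own proof: the paper's entire justification is that algebraic closedness of $\kk$ forces the finite-dimensional division algebra $A(s,s)$ (Schur) to be $\kk$, after which Theorem~\ref{thm:2} applies. Your additional check that the homs of $A\boxtimes B$ are finite-dimensional is a harmless elaboration of what the paper leaves implicit.
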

\begin{proof}
  The algebraic closedness of \kk\ ensures that the \kk-algebra of
  endomorphisms of a  simple object, if finite dimensional, is
  isomorphic to \kk. 
\end{proof}

\bibliographystyle{abbrv}
\bibliography{deligne_only}  

\begin{thebibliography}{10}

\bibitem{SGA4}
{\em Th\'eorie des topos et cohomologie \'etale des sch\'emas. {T}ome 3}.
\newblock Springer-Verlag, Berlin, 1973.
\newblock S\'eminaire de G\'eom\'etrie Alg\'ebrique du Bois-Marie 1963--1964
  (SGA 4), Dirig\'e par M. Artin, A. Grothendieck et J. L. Verdier. Avec la
  collaboration de P. Deligne et B. Saint-Donat, Lecture Notes in Mathematics,
  Vol. 305.

\bibitem{MR1294136}
J.~Ad{\'a}mek and J.~Rosick{\'y}.
\newblock {\em Locally presentable and accessible categories}, volume 189 of
  {\em London Mathematical Society Lecture Note Series}.
\newblock Cambridge University Press, Cambridge, 1994.

\bibitem{Benabou.bicat}
J.~B{\'e}nabou.
\newblock Introduction to bicategories.
\newblock In {\em Reports of the Midwest Category Seminar}, pages 1--77.
  Springer, Berlin, 1967.

\bibitem{MR1394507}
F.~Borceux and C.~Quinteiro.
\newblock A theory of enriched sheaves.
\newblock {\em Cahiers Topologie G\'eom. Diff\'erentielle Cat\'eg.},
  37(2):145--162, 1996.

\bibitem{MR0217051}
N.~Bourbaki.
\newblock {\em \'{E}l\'ements de math\'ematique. {F}ascicule {XXVII}.
  {A}lg\`ebre commutative. {C}hapitre 1: {M}odules plats. {C}hapitre 2:
  {L}ocalisation}.
\newblock Actualit\'es Scientifiques et Industrielles, No. 1290. Herman, Paris,
  1961.

\bibitem{MR0120260}
S.~U. Chase.
\newblock Direct products of modules.
\newblock {\em Trans. Amer. Math. Soc.}, 97:457--473, 1960.

\bibitem{MR1004603}
B.~Day and R.~Street.
\newblock Localisation of locally presentable categories.
\newblock {\em J. Pure Appl. Algebra}, 58(3):227--233, 1989.

\bibitem{Deligne:TannakianCats}
P.~Deligne.
\newblock Cat\'egories tannakiennes.
\newblock In {\em The Grothendieck Festschrift, Vol.\ II}, volume~87 of {\em
  Progr. Math.}, pages 111--195. Birkh\"auser Boston, Boston, MA, 1990.

\bibitem{Etingof:AnalogueRadford}
P.~Etingof, D.~Nikshych, and V.~Ostrik.
\newblock An analogue of {R}adford's {$S\sp 4$} formula for finite tensor
  categories.
\newblock {\em Int. Math. Res. Not.}, (54):2915--2933, 2004.

\bibitem{Etiongof:OnFusionCats}
P.~Etingof, D.~Nikshych, and V.~Ostrik.
\newblock On fusion categories.
\newblock {\em Ann. of Math. (2)}, 162(2):581--642, 2005.

\bibitem{Etingof:FinTenCats}
P.~Etingof and V.~Ostrik.
\newblock Finite tensor categories.
\newblock {\em Mosc. Math. J.}, 4(3):627--654, 782--783, 2004.

\bibitem{MR0232821}
P.~Gabriel.
\newblock Des cat\'egories ab\'eliennes.
\newblock {\em Bull. Soc. Math. France}, 90:323--448, 1962.

\bibitem{MR0327863}
P.~Gabriel and F.~Ulmer.
\newblock {\em Lokal pr\"asentierbare {K}ategorien}.
\newblock Lecture Notes in Mathematics, Vol. 221. Springer-Verlag, Berlin,
  1971.

\bibitem{tricategories}
R.~Gordon, A.~J. Power, and R.~Street.
\newblock Coherence for tricategories.
\newblock {\em Mem. Amer. Math. Soc.}, 117(558):vi+81, 1995.

\bibitem{Hyland:PsdComm}
M.~Hyland and J.~Power.
\newblock Pseudo-commutative monads and pseudo-closed 2-categories.
\newblock {\em J. Pure Appl. Algebra}, 175(1-3):141--185, 2002.
\newblock Special volume celebrating the 70th birthday of Professor Max Kelly.

\bibitem{2categoricallimits}
G.~Kelly.
\newblock {Elementary observations on 2-categorical limits}.
\newblock {\em Bull. Aust. Math. Soc.}, 39(2):301--317, 1989.

\bibitem{Kelly:BCECT}
G.~M. Kelly.
\newblock {\em Basic concepts of enriched category theory}, volume~64 of {\em
  London Mathematical Society Lecture Note Series}.
\newblock Cambridge University Press, Cambridge, 1982.

\bibitem{Kelly:StructuresFiniteLimits}
G.~M. Kelly.
\newblock Structures defined by finite limits in the enriched context. {I}.
\newblock {\em Cahiers Topologie G\'eom. Diff\'erentielle}, 23(1):3--42, 1982.
\newblock Third Colloquium on Categories, Part VI (Amiens, 1980).

\bibitem{Kelly:BCECTrep}
G.~M. Kelly.
\newblock Basic concepts of enriched category theory.
\newblock {\em Repr. Theory Appl. Categ.}, (10):vi+137 pp. (electronic), 2005.

\bibitem{MR1862634}
T.~Kerler and V.~V. Lyubashenko.
\newblock {\em Non-semisimple topological quantum field theories for
  3-manifolds with corners}, volume 1765 of {\em Lecture Notes in Mathematics}.
\newblock Springer-Verlag, Berlin, 2001.

\bibitem{pre05659661}
S.~Lack.
\newblock {A 2-categories companion.}
\newblock {Baez, John C. (ed.) et al., Towards higher categories. Berlin:
  Springer. The IMA Volumes in Mathematics and its Applications 152, 105-191
  (2010).}, 2010.

\bibitem{Lyubashenko:TenProdEqSh}
V.~Lyubashenko.
\newblock Tensor products of categories of equivariant perverse sheaves.
\newblock {\em Cah. Topol. G\'eom. Diff\'er. Cat\'eg.}, 43(1):49--79, 2002.

\bibitem{Lyubashenko:SqrHopf}
V.~V. Lyubashenko.
\newblock Squared {H}opf algebras.
\newblock {\em Mem. Amer. Math. Soc.}, 142(677):x+180, 1999.

\bibitem{MR1031717}
M.~Makkai and R.~Par{\'e}.
\newblock {\em Accessible categories: the foundations of categorical model
  theory}, volume 104 of {\em Contemporary Mathematics}.
\newblock American Mathematical Society, Providence, RI, 1989.

\bibitem{McCrudden:BalancedCoalg}
P.~McCrudden.
\newblock Balanced coalgebroids.
\newblock {\em Theory Appl. Categ.}, 7:No. 6, 71--147 (electronic), 2000.

\bibitem{1211.3678}
D.~Sch\"{a}ppi.
\newblock Ind-abelian categories and quasi-coherent sheaves, 2012.
\newblock arXiv:1211.3678.

\bibitem{MR0260799}
J.-P. Soublin.
\newblock Anneaux et modules coh\'erents.
\newblock {\em J. Algebra}, 15:455--472, 1970.

\bibitem{fibrations}
R.~Street.
\newblock {Fibrations in bicategories}.
\newblock {\em Cahiers de Topologie et G\'eom\'etrie Diff\'erentielle. XXI.},
  (2):111--159, 1980.

\bibitem{MR0262334}
G.~C. Wraith.
\newblock {\em Algebraic theories}.
\newblock Lectures Autumn 1969. Lecture Notes Series, No. 22. Matematisk
  Institut, Aarhus Universitet, Aarhus, 1970.

\end{thebibliography}
\end{document}